\newtheorem{thm}{Theorem}[section]
\newtheorem{prop}[thm]{Proposition}
\newtheorem{cor}[thm]{Corollary}
\newtheorem{lem}[thm]{Lemma}
\newtheorem{conj}[thm]{Conjecture}
\newtheorem{exa}[thm]{Example}
\newcommand{\fp}{{\mathfrak p}}
\newcommand{\fP}{{\mathfrak P}}
\newcommand{\spc}{\ul{\rule{5pt}{0pt}}}
\newcommand{\tao}{\text{C1}}
\newcommand{\tat}{\text{C2}}
\newcommand{\sba}{\bar{s}}
\newcommand{\vsp}{\rule{0pt}{15pt}}
\DeclareMathOperator{\Pin}{Pin}
\DeclareMathOperator{\Pk}{Pk}
\DeclareMathOperator{\Val}{Val}
\newcommand{\ben}{\begin{enumerate}}
\newcommand{\een}{\end{enumerate}}
\newcommand{\ble}{\begin{lem}}
\newcommand{\ele}{\end{lem}}
\newcommand{\bth}{\begin{thm}}
\renewcommand{\eth}{\end{thm}}
\newcommand{\bpr}{\begin{prop}}
\newcommand{\epr}{\end{prop}}
\newcommand{\bco}{\begin{cor}}
\newcommand{\eco}{\end{cor}}
\newcommand{\bcon}{\begin{conj}}
\newcommand{\econ}{\end{conj}}
\newcommand{\bde}{\begin{defn}}
\newcommand{\ede}{\end{defn}}
\newcommand{\bex}{\begin{exa}}
\newcommand{\eex}{\end{exa}}
\newcommand{\barr}{\begin{array}}
\newcommand{\earr}{\end{array}}
\newcommand{\btab}{\begin{tabular}}
\newcommand{\etab}{\end{tabular}}
\newcommand{\beq}{\begin{equation}}
\newcommand{\eeq}{\end{equation}}
\newcommand{\bea}{\begin{eqnarray*}}
\newcommand{\eea}{\end{eqnarray*}}
\newcommand{\bal}{\begin{align*}}
\newcommand{\bce}{\begin{center}}
\newcommand{\ece}{\end{center}}
\newcommand{\bpi}{\begin{picture}}
\newcommand{\epi}{\end{picture}}
\newcommand{\bpp}{\begin{picture}}
\newcommand{\epp}{\end{picture}}
\newcommand{\bfi}{\begin{figure} \begin{center}}
\newcommand{\efi}{\end{center} \end{figure}}
\newcommand{\bprf}{\begin{proof}}
\newcommand{\eprf}{\end{proof}\medskip}
\newcommand{\capt}{\caption}
\newcommand{\bsl}{\begin{slide}{}}
\newcommand{\esl}{\end{slide}}
\newcommand{\bfr}{\begin{frame}}
\newcommand{\efr}{\end{frame}}
\newcommand{\comp}{\models}
\newcommand{\hqed}{\hfill \qed}
\newcommand{\eqed}[1]{$\textcolor{white}{\qed}\hfill{\dil#1}\hfill\qed$}
\newcommand{\ul}{\underline}
\newcommand{\ol}{\overline}
\newcommand{\hs}[1]{\hspace{#1}}
\newcommand{\hso}[1]{\hspace{-1pt}}
\newcommand{\vs}[1]{\vspace{#1}}
\newcommand{\emp}{\emptyset}
\newcommand{\sbs}{\subset}
\newcommand{\sbe}{\subseteq}
\newcommand{\spe}{\supseteq}
\newcommand{\case}[4]{\left\{\barr{ll}#1&\mbox{#2}\\#3&\mbox{#4}\earr\right.}
\newcommand{\flf}[2]{\left\lfloor\frac{#1}{#2}\right\rfloor}
\def\<{\langle}
\def\>{\rangle}
\newcommand{\ree}[1]{(\ref{#1})}
\newcommand{\ra}{\rightarrow}
\newcommand{\al}{\alpha}
\newcommand{\be}{\beta}
\newcommand{\si}{\sigma}
\newcommand{\De}{\Delta}
\newcommand{\bbN}{{\mathbb N}}
\newcommand{\bbP}{{\mathbb P}}
\newcommand{\cA}{{\cal A}}
\newcommand{\cB}{{\cal B}}
\newcommand{\cC}{{\cal C}}
\newcommand{\cI}{{\cal I}}
\newcommand{\cO}{{\cal O}}
\newcommand{\cV}{{\cal V}}
\newcommand{\fS}{{\mathfrak S}}
\newcommand{\ab}{\ol{a}}
\newcommand{\Ab}{\ol{A}}
\newcommand{\Sb}{\ol{S}}
\newcommand{\dil}{\displaystyle}
\begin{document}
\pagestyle{plain}

\title{Pinnacle Set Properties
}
\author{Rachel Domagalski\\[-5pt]
\small Department of Mathematics, Michigan State University,\\[-5pt]
\small East Lansing, MI 48824-1027, USA, {\tt domagal9@msu.edu}\\
Jinting Liang\\[-5pt]
\small Department of Mathematics, Michigan State University,\\[-5pt]
\small East Lansing, MI 48824-1027, USA, {\tt liangj26@msu.edu}\\
Quinn Minnich\\[-5pt]
\small Department of Mathematics, Michigan State University,\\[-5pt]
\small East Lansing, MI 48824-1027, USA, {\tt minnichq@msu.edu}\\
Bruce E. Sagan\\[-5pt]
\small Department of Mathematics, Michigan State University,\\[-5pt]
\small East Lansing, MI 48824-1027, USA, {\tt bsagan@msu.edu}\\
Jamie Schmidt\\[-5pt]
\small Department of Mathematics, Michigan State University,\\[-5pt]
\small East Lansing, MI 48824-1027, USA, {\tt schmi710@msu.edu}\\
Alexander Sietsema\\[-5pt]
\small Department of Mathematics, Michigan State University,\\[-5pt]
\small East Lansing, MI 48824-1027, USA, {\tt sietsem6@msu.edu}
}

\date{\today\\[10pt]
	\begin{flushleft}
	\small Key Words: ballot sequence, binomial coefficient, permutation, pinnacle set
	                                       \\[5pt]
	\small AMS subject classification (2010):  05A05  (Primary) 05A10, 05A15, 05A19 (Secondary)
	\end{flushleft}}

\maketitle

\begin{abstract}

Let $\pi=\pi_1\pi_2\ldots\pi_n$ be a permutation in the symmetric group $\fS_n$ written in one-line notation.  The pinnacle set of $\pi$, denoted $\Pin\pi$, is the set of all $\pi_i$ such that $\pi_{i-1}<\pi_i>\pi_{i+1}$.  This is an analogue of the well-studied peak set of $\pi$ where one considers values rather than positions.  The pinnacle set was introduced by Davis, Nelson, Petersen, and Tenner who showed that it has many interesting properties.  In particular, they proved that the number of subsets of $[n]=\{1,2,\ldots,n\}$ which can be the pinnacle set of some permutation is a binomial coefficient.  Their proof involved a bijection with lattice paths and was somewhat involved.  We give a simpler demonstration of this result which does not need lattice paths.  Moreover, we show that our map and theirs are different descriptions of the same function.  Davis et al.\ also studied the number of pinnacle sets with maximum $m$ and cardinality $d$ which they denoted by $\fp(m,d)$.  We show that these integers are ballot numbers and give two proofs of this fact: one using finite differences and one bijective.  Diaz-Lopez, Harris, Huang, Insko, and Nilsen found a summation formula for calculating the number of permutations in $\fS_n$ having a given pinnacle set.  We derive a new expression for this number which is faster to calculate in many cases.  We also show how this method can be adapted to find the number of orderings of a pinnacle set which can be realized by some $\pi\in\fS_n$.

\end{abstract}

\section{Introduction}

Let $\bbN$ and $\bbP$ be the nonnegative and positive integers, respectively. Given  $m,n\in\bbP$ we use the notation $[m,n]=\{m,m+1,\ldots,n\}$ for the interval they define.  We abbreviate $[1,n]$ to $[n]$.  Let $\fS_n$ be the symmetric group of all permutations $\pi=\pi_1\pi_2\ldots\pi_n$  of $[n]$ written in one-line notation.  An important statistic on $\fS_n$ is the {\em peak set} of a permutation $\pi$ which is defined as
$$
\Pk\pi=\{i \mid \pi_{i-1}<\pi_i>\pi_{i+1}\}\sbe[2,n-1].
$$
For example, if $\pi=18524376$ then $\Pk\pi=\{2,5,7\}$ since $\pi_2=8$, $\pi_5=4$, and $\pi_7=7$ are all bigger than the elements  directly to their left and right.
It is easy to see that $S\sbe[2,n-1]$ is the peak set of some $\pi\in\fS_n$ if and only if no two elements of $S$ are consecutive.  So the number of possible peak sets is a Fibonacci number.  One could also ask how many permutations have a given peak set.  This question was answered by Billey, Burdzy and Sagan.
\bth[\cite{bbs:pgp}]
\label{bbs}
If $n\in\bbP$ and $S\sbe[2,n]$ then 
$$
\#\{\pi \mid \Pk\pi=S\} = p(S;n) 2^{n-\#S-1}
$$
where $\#$ denotes cardinality and $p(S;n)$ is a polynomial in $n$ depending on $S$.\hqed
\eth

It is natural to study the values at the peak indices.  This line of research was initiated by Davis, Nelson, Petersen, and Tenner~\cite{dnpt:psp} and continued by 
Rusu~\cite{rus:spf}; Diaz-Lopez, Harris, Huang, Insko, and Nilsen~\cite{dhhin:fep}; and Rusu and Tenner~\cite{rt:apo}.  Define the {\em pinnacle set} of a permutation $\pi\in\fS_n$ to be
$$
\Pin\pi = \{\pi_i \mid \pi_{i-1}<\pi_i>\pi_{i+1}\}\sbe[3,n]
$$
Continuing with the example $\pi=18524376$ we see that $\Pin\pi=\{4,7,8\}$.  Following Davis et al., call a set $S$ an {\em admissible pinnacle set} if there is some permutation $\pi$ with $\Pin\pi=S$.  They found a criterion for $S$ to be admissible which will be useful in the sequel.  This result was stated in recursive fashion, but it is clearly equivalent to the following non-recursive version.
\bth[\cite{dnpt:psp}]
\label{s_i>2i}
Let $S=\{s_1<s_2<\ldots<s_d\}\sbs\bbP$. The set $S$ is an admissible pinnacle set if and only if  we have 
$$
s_i>2i
$$
for all $i\in[d]$.\hqed
\eth
\noindent Davis et al.\ were able to count the number of admissible pinnacle sets for $\pi\in\fS_n$.
\bth[\cite{dnpt:psp}]
\label{binom}
If 
$$
\cA_n=\{S \mid \text{$S=\Pin\pi$ for some $\pi\in\fS_n$}\}
$$
then

\eqed{\#\cA_n=\binom{n-1}{\flf{n-1}{2}}.}
\eth
\noindent They also studied the more refined constants
$$
\fp(m,d) = \#\{S\in\cA_n \mid \max S = m \text{ and } \#S=d\}
$$
where $n\ge m$.  Note that if $S=\Pin\pi$ for some $\pi\in\fS_n$ then $S$ is also a pinnacle set of some $\pi'\in\fS_{n'}$ for all $n'\ge n$ since one can just add values larger than $n$ to the beginning of $\pi$ in decreasing order.  It follows that the exact value of $n$ does not play a role in the definition of 
$\fp(m,d)$.

A number of questions have been raised about pinnacle sets.  For example, if
$$
p_S(n) =\#\{\pi\in\fS_n \mid \Pin\pi=S\}
$$
then how can one compute these numbers as there does not seem to be an analogue of Theorem~\ref{bbs} in the context of pinnacles.  Davis et al.\ gave a recursive procedure for doing so, and then a non-recursive summation formula for determining the $p_S(n)$ was proposed in the paper of Diaz-Lopez et al.  

Another problem suggested earlier is as follows.  Given an admissible $S$, a permutation $\si$ of $S$ is called an {\em admissible ordering} if there is a  $\pi\in\fS_n$ with $\Pin\pi=S$ and the pinnacles of $\pi$ occur in the same order as they do in $\si$.  Let 
$$
\cO(S) = \{\si \mid \text{$\si$ is an admissible ordering of $S$}\}.
$$
For example, if $S=\{3,5,7\}$ then
$\si=537\in\cO(S)$ as witnessed by 
$\pi=4513276$.  But $375\not\in\cO(S)$ since in order for $6$ not to be a pinnacle, it must be directly to the left or right of $7$ and both choices lead to a contradiction.
The set $\cO_S$ was studied in the articles of Rusu, and  of Rusu and Tenner.  In the latter paper, the authors asked for a function to compute $\#\cO(S)$.

The rest of this article is structured as follows.  The proof of Theorem~\ref{binom} in~\cite{dnpt:psp} used a bijection involving lattice paths and was somewhat complicated.  In the next section we provide a simpler demonstration which does not need lattice paths.  In fact, we show that our map and theirs are different descriptions of the same function.
Section~\ref{bn} is devoted to the $\fp(m,d)$.  We will show that these are actually just ballot numbers, and do this in two ways: using the theory of finite differences and via a bijection.  In the final section we give a new summation formula for calculating $p_S(n)$ which is faster in many cases than the one proposed in~\cite{dhhin:fep}.  We also show that our algorithm can be modified to find $\#\cO(S)$.

\section{Counting admissible pinnacle sets}
\label{cap}

In this section we  give our proof of Theorem~\ref{binom}.  Our strategy will be as follows.  First, we will introduce the set of interleaved permutations which are obviously counted by the desired binomial coefficient.  Next, we will associate with each admissible pinnacle set $S$ a particular permutation $\pi$ such that $\Pin\pi = S$.  This permutation will be called right canonical because its pinnacles will be as far right as possible.  Finally, we will show that the set of interleaved permutations and the set of right canonical permutations are, in fact, the same.  This will complete the proof of the theorem.

An {\em interleaved permutation}  $\pi\in\fS_n$ is one constructed in the following manner.  Pick any $A\sbe[2,n]$ with $\#A=\flf{n-1}{2}$. 
\ben
\item[I1] Fill the first $\flf{n-1}{2}$ even positions of $\pi$ with the elements of $A$ in increasing order.
\item[I2] Fill the remaining positions of $\pi$ with the elements of $\Ab=[n]-A$ in increasing order. 
\een
As an example, suppose $n=9$ and $A=\{2,3,7,9\}$.  After step I1 we have
$$
\pi = \spc\ 2\ \spc\ 3\ \spc\ 7\ \spc\ 9\ \spc.
$$
Since $\Ab=\{1,4,5,6,8\}$, after I2 we have the full interleaved permutation
\begin{equation}
\label{inter} 
\pi = 1\ 2\ 4\ 3\ 5\ 7\ 6\ 9\ 8.
\end{equation}
Let 
$$
\cI_n =\{\pi\in\fS_n \mid \text{$\pi$ is interleaved}\}.
$$
Clearly $\pi\in\cI_n$ is completely determined by the choice of $A$.  It follows immediately that
\begin{equation}
\label{cI}
\#\cI_n = \binom{n-1}{\flf{n-1}{2}}.
\end{equation}

Now given an admissible pinnacle set $S=\{s_1<s_2<\ldots<s_d\}\sbs[n]$ we wish to construct a permutation $\pi\in\fS_n$ with $\Pin\pi=S$.  We use the following algorithm to construct the {\em right canonical permutation} $\pi$ from $S$.  We first deal with the case where $n$ is odd. Let $\Sb=[n]-S$.
\ben
\item[C1] Place elements of $\Sb$ in $\pi$ moving right to left, starting with the largest unused element of $\Sb$ and then decreasing until an element less than the largest unused element of $S$ is placed.
\item[C2]  Place the largest unused element of $S$ in the rightmost unused position.
\item[C3]  Iterate C1 and C2 until all elements of $S$ and $\Sb$ are placed.
\een
If $n$ is even, the only change to this procedure is that we fill both $\pi_n$ and $\pi_{n-1}$ with elements of $\Sb$ before considering whether to place an element of $S$.  To illustrate, consider $n=9$ and $S=\{4,7,9\}$.  So $\Sb=\{1,2,3,5,6,8\}$.  Here is the construction of $\pi$ where, at each stage, we note whether C1 or C2 is being used.
$$
\barr{r|ccccccccc}
\text{step}&\tao&\tat&\tao&\tat&\tao&\tao&\tat&\tao&\tao\\
\pi & 8 & 98 & 698 & 7698 & 57698 & 357698 & 4357698 & 24357698
& 124357698
\earr
$$
So the right canonical permutation for $S=\{4,7,9\}$ is
$\pi=124357698$.  Note that $\Pin\pi=S$.  Furthermore, this is the same permutation as obtained in~\ree{inter}.  However, neither the sets $A$ nor $\Ab$ equals $S$.
Let
$$
\cC_n =\{\pi\in\fS_n \mid \text{$\pi$ is right canonical}\}.
$$

We first need to show that C1--C3 is well defined in that every position of $\pi$ gets filled and that we always have
$\Pin\pi=S$.
\begin{lem}
\label{C1--C3}
If $S\sbs[n]$ is an admissible set then C1--C3 produces a permutation $\pi$ with  $\Pin \pi=S$.  Thus
$$
\#\cC_n=\#\cA_n.
$$
\end{lem}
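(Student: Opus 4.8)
The plan is to analyze the algorithm through its \emph{block structure}. Each pass of C1 lays down a maximal run of $\Sb$-values, so the output reads
\[
\pi = B_0\, s_1\, B_1\, s_2\, \cdots\, s_d\, B_d,
\]
where $B_d$ is produced by the first pass of C1 (threshold $s_d$), then $B_{d-1}$, and so on, with $B_0$ the final pass placed after $s_1$. Because C1 always removes the currently largest unused elements of $\Sb$ in decreasing order from right to left, each $B_k$ is an increasing run, and the $\Sb$-values sitting to the right of $s_k$ are exactly the top values of $\Sb$, namely $\{x\in\Sb : x\ge t_k\}$, where $t_k$ is the last (hence smallest) value of $B_k$. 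First I would record two \emph{locality} facts: by the stopping rule of C1 the right neighbour of $s_k$ equals $t_k<s_k$; and since C1 had already exhausted every unused $\Sb$-value $\ge s_k$ before $s_k$ was placed, all $\Sb$-values remaining afterwards are $<s_k$, so the left neighbour of $s_k$ (the top of $B_{k-1}$, or of $B_0$ when $k=1$) is also $<s_k$.

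Granting that the algorithm never stalls and that $B_0\neq\emptyset$, these two facts finish the proof: each $s_k$ then has both neighbours present and smaller, so it is a pinnacle, while any value distinct from all $s_k$ lies in some increasing run $B_j$ and is therefore not a pinnacle (its within-run successor is larger unless it is the top of $B_j$, in which case its right neighbour is $s_{j+1}>\max B_j$, or it is $\pi_n$). Hence $\Pin\pi=S$ exactly. So the real work is to show the algorithm is well defined, and I would do this with the single invariant
\[
u_k := \#\{x\in\Sb : x<t_k\} \ge k,\qquad k=d,d-1,\ldots,1,
\]
proved by downward induction; it simultaneously guarantees each block is nonempty (so every $s_k$ has neighbours) and, at $k=1$, that $u_1\ge1$, i.e.\ $B_0\neq\emptyset$. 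This is the one place the admissibility bound $s_i>2i$ of Theorem~\ref{s_i>2i} is used.

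For the base case $B_d$ stops at $t_d=\max(\Sb\cap[1,s_d))$; since $\#(\Sb\cap[1,s_d))=s_d-d$, discarding the maximum gives $u_d=s_d-d-1\ge d$ from $s_d\ge2d+1$. For the inductive step I split on whether $s_k\le t_{k+1}$ or $s_k>t_{k+1}$. In the first case C1 stops at $t_k=\max(\Sb\cap[1,s_k))$, and the same count yields $u_k=s_k-k-1\ge k$; in the second case C1 places a single value, so $u_k=u_{k+1}-1\ge k$ by the inductive hypothesis. The even-$n$ variant alters only $B_d$, forcing $|B_d|\ge2$ and hence possibly $u_d=s_d-d-2$; but that modification only takes effect when $s_d=n$, and then $n$ even forces $s_d\ge2d+2$, so again $u_d\ge d$ and the rest of the induction is untouched.

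Finally, with $\Pin\pi=S$ established, the map $S\mapsto\pi$ sends $\cA_n$ into $\cC_n$, is surjective by the very definition of right canonical, and is injective because $\Pin$ recovers $S$; thus $\#\cC_n=\#\cA_n$. I expect the invariant $u_k\ge k$ to be the crux: the subtle point is the second inductive case, where one must be certain that greedily stripping large $\Sb$-values in the earlier blocks has not eaten into the small values C1 still needs, and it is exactly the strict inequality $s_i>2i$ that preserves a reservoir of at least $k$ small $\Sb$-values at every stage.
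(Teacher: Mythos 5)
Your proof is correct and takes essentially the same route as the paper: both analyze the right-to-left greedy placement and reduce everything to the well-definedness question, which is settled by a counting invariant extracted from Theorem~\ref{s_i>2i}. Your invariant $u_k\ge k$ is exactly the paper's key claim that if $\pi_p=s_i$ and $\pi_{p+1}=\sba_j$ then $j>i$ (there $u_i=j-1$); your block-by-block downward induction is a cleaner repackaging of the paper's position-by-position reverse induction, and your explicit handling of the even-$n$ case (via $s_d=n$ even forcing $s_d\ge 2d+2$) is more careful than the paper's one-line reduction to the odd case.
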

\begin{proof}
Clearly the second sentence follows from the first.
For the first sentence, we will present details for the case when $n$ is odd.  If $n$ is even, then one can just place the largest element of $\Sb$ in position $n$ and proceed as in the odd case.

The following notation will be useful.  Let
\begin{align*}
    S & = \{s_1<s_2<\ldots<s_d\},\\
    \Sb&=\{\sba_1<\sba_2<\ldots<\sba_{n-d}\}.
\end{align*}
We will also let $S_p$ and $\Sb_p$ denote the elements of $S$ and of $\Sb$, respectively, which have not been used during the placement of $\pi_n,\pi_{n-1},\ldots,\pi_p$.

We will use reverse induction on the position $p$ being filled in $\pi$.  When $p=n$, we have $\Sb\neq\emp$ since $1$, which is always a non-pinnacle, must be in $\Sb$.  So there is an element $\sba_{n-d}$ to place in position $n$.  Furthermore this element can not be a pinnacle since it is the last element of the permutation, which agrees with the fact that it is in $\Sb$.

Suppose that $\pi_n,\pi_{n-1},\ldots,\pi_{p+1}$ have been constructed.  Suppose first that $\pi_{p+1}\in\Sb$.  One subcase is if either $S_p=\emp$, or $S_p\neq\emp$ and $\pi_{p+1}>\max S_p$.  We must show that $\Sb_p\neq\emp$ so that we can let
$\pi_p=\max\Sb_p$.  This is true when $S_p=\emp$  since 
$|S_p\uplus \Sb_p|=p$.  If the second option holds then we have $\pi_{p+1}>\max S_p$.  But there must be at least  two elements of $\Sb$ smaller than $\max S_p$ since $S$ is admissible and so there is some permutation making  $\max S_p$ a pinnacle.
Also, these elements must still be in $\Sb_p$ since elements of this set are placed in decreasing order right to left.  Thus this set is nonempty as desired. Furthermore, $\pi_p$ is not a pinnacle since it is smaller than $\pi_{p+1}$.

Now consider the subcase when $\pi_{p+1}<\max S_p$.  Then we let $\pi_p=\max S_p$ which is well defined.  But we must show that $\pi_p$ is a pinnacle.  We know $\pi_p>\pi_{p+1}$.  So there remains to check whether one can construct 
$\pi_{p-1}$  with $\pi_{p-1}<\pi_p$. 
For this, it suffices to show that $\Sb_{p-1}\neq\emp$ since then we will have
$\pi_{p-1}=\max\Sb_{p-1}<\pi_{p+1}<\pi_p$.
Note that this will also finish the induction step. 

We claim that if $\pi_p=s_i$ and $\pi_{p+1}=\sba_j$ then  $j>i$.  It will then follow that $\sba_{j-1}$ exists and can be used for $\pi_{p-1}$.  But by Theorem~\ref{s_i>2i} we have $s_i>\sba_{i+1}$.
Indeed, if $s_i<\sba_{i+1}$ then at most the elements $s_1,\ldots,s_{i-1},\sba_1,\ldots,\sba_i$ are less than $s_i$ so that $s_i\le 2i$,  a contradiction.
Also, elements of $\Sb$ are placed in decreasing order with $s_i$ being placed as early as possible with a smaller element to its right. The desired bound on $j$ follows.
\end{proof}

We are now ready to give our proof of Theorem~\ref{binom}.
\bth
We have $\cC_n=\cI_n$.  Thus
$$
\#\cA_n=\binom{n-1}{\flf{n-1}{2}}.
$$
\eth
\begin{proof}
The second statement follows directly from the first, Lemma~\ref{C1--C3}, and equation~\ree{cI}.
So we only need to prove that the two sets are the same.  We will consider the case when $n$ is odd, as the even case is similar.

We begin by showing that any right canonical permutation $\pi$ is interleaved.  That is to say, the subword consisting of all even indices is an increasing sequence, and the subword consisting of all odd indices is an increasing sequence starting with 1.

In terms of the placement of $1$, note that $\pi_1$ is not a  pinnacle.  And since non-pinnacles are placed in decreasing order from right to left we must have $\pi_1=1$.

To finish this direction, it is enough to show that for any elements $\pi_i$ and $\pi_{i+2}$, we have that $\pi_{i+2}>\pi_i$. Note that we are done immediately if $\pi_i$ and $\pi_{i+2}$ are either both pinnacles or both non-pinnacles since the construction places them in decreasing order from right to left. 
If $\pi_{i+2}$ is a pinnacle and $\pi_i$ is not, then by the pinnacle assumption $\pi_{i+2}>\pi_{i+1}$.  And since non-pinnacles are placed in decreasing order right to left
$\pi_{i+1}>\pi_i$.  Combining the two inequalities gives the desired result.
Finally, suppose $\pi_i$ is a pinnacle and $\pi_{i+2}$ is not.  Then $\pi_{i+1}$ is not a pinnacle, being adjacent to $\pi_i$.  And, by construction, $\pi_{i+1}$ must be the first available non-pinnacle right to left which is smaller than $\pi_i$.  It follows that $\pi_{i+2}>\pi_i$.

For set containment the other way, let $\pi$ be an interleaved permutation. 
It suffices to show that if the elements  of $\pi$ are placed right to left then they follow C1--C3.  
Consider $\pi_i$ placed after $\pi_{i+1}$ with $1<i<n$.  The boundary cases when $i=1$ or $n$ are similar.
If $\pi_i<\pi_{i+1}$ then $\pi_i$ is a non-pinnacle and $\pi_{i+1}$ is either a non-pinnacle or a pinnacle.  In the first case, the non-pinnacles are being placed in decreasing order as desired.  In the second, the previously placed non-pinnacle is $\pi_{i+2}$.  So the same conclusion holds by the interleaving condition. Now consider the possibility $\pi_i>\pi_{i+1}$.  By the interleaving condition, $\pi_{i-1}<\pi_{i+1}$ so $\pi_i$ is a pinnacle.
Either $\pi_{i+2}$ is a pinnacle or not, the latter possibility including the case that $\pi_{i+2}$ does not exist.  If it is, then the interleaving condition shows that pinnacles are being placed in decreasing order.  
If $\pi_{i+2}$ is not a pinnacle, then this fact and the  interleaving condition again  imply $\pi_i<\pi_{i+2}<\pi_{i+3}$.  It follows that $\pi_i$ was placed after the first smaller non-pinnacle and, by the interleaving condition one last time, that any pinnacles to its right are larger.  This completes the proof of the other containment.
\end{proof}

Given a set $A$ and $k\in\bbN$ we let $\binom{A}{k}$ be the set of all $k$-element subsets of $A$.  The above construct gives us a bijection
$$
\psi:\binom{[2,n]}{\flf{n-1}{2}}\ra\cA_n
$$
given by
$$
\psi(A) = \Pin\pi
$$
where $\pi$ is the interleaving permutation corresponding to $A$. 

\bfi
\begin{tikzpicture}
\draw[->] (0,-3)--(0,3);
\draw[->] (0,0)--(9,0);
    	\foreach \x in {1,...,8}
     		\draw (\x,1pt) -- (\x,-3pt)
			node[anchor=north] {\x};
    	\foreach \y in {-2,...,2}
     		\draw (1pt,\y) -- (-3pt,\y) 
     			node[anchor=east] {\y};
\fill(0,0) circle(.1);
\fill(1,-1) circle(.1);
\fill(2,-2) circle(.1);
\fill(3,-1) circle(.1);
\fill(4,0) circle(.1);
\fill(5,1) circle(.1);
\fill(6,0) circle(.1);
\fill(7,1) circle(.1);
\fill(8,0) circle(.1);
\draw(.3,-.7) node{$2$};
\draw(1.3,-1.7) node{$3$};
\draw(2.7,-1.7) node{$4$};
\draw(3.7,-.7) node{$5$};
\draw(4.3,.7) node{$6$};
\draw(5.7,.7) node{$7$};
\draw(6.3,.7) node{$8$};
\draw(7.7,.7) node{$9$};
\draw (0,0)--(2,-2)--(5,1)--(6,0)--(7,1)--(8,0);
\end{tikzpicture}
\capt{The lattice path $L$ for $A=\{2,3,7,9\}$ \label{path}}
\efi

In~\cite{dnpt:psp}, the authors proved Theorem~\ref{binom} using a bijection 
$$
\phi:\binom{[2,n]}{\flf{n-1}{2}}\ra\cA_n
$$
defined as follows.
An {\em up-down lattice path} $L$ starts at the origin and uses steps which are either up ($U$) or down ($D$) parallel to the vectors $[1,1]$ and $[1,-1]$, respectively.  
For more information about lattice paths, see the text of Sagan~\cite{sag:aoc}.
It will be convenient to index the steps of $L$ with $[2,n]$ and write
$L=s_2 s_3\ldots s_n$.  Associate with $A\in \binom{[2,n]}{\flf{n-1}{2}}$ the lattice path $L$ such that
$$
s_i =\case{D}{if $i\in A$,}{U}{if $i\not\in A$.}
$$
To illustrate, if $n=9$ and $A=\{2,3,7,9\}$ as in the example beginning this section then 
$$
L=DDUUUDUD
$$
as depicted in Figure~\ref{path} where each step is labeled by its index.  We now define
$$
\phi(A) = \{i \mid \text{in $L$ either $s_i=U$  strictly below the $x$-axis, or $s_i=D$ weakly above the $x$-axis}\}.
$$
Continuing our example, 
$\phi(\{2,3,7,9\})=\{4,7,9\}=\psi(\{2,3,7,9\})$.  This is not an accident.
\bpr
We have $$\phi=\psi.$$
\epr
\begin{proof}
We will give the proof for $n$ odd as the even case is similar.  Let $l=(n-1)/2$.  We need to show that $\phi(A)=\psi(A)$ for all 
$A\in \binom{[2,n]}{l}$.  Suppose
$A=\{a_1<a_2<\ldots<a_l\}$ and 
$\Ab=[n]-A=\{\ab_1<\ab_2<\ldots<\ab_{n-l}\}$.
Let $L$ and $\pi$ be the lattice path and interleaved permutation, respectively, associated with $A$.   So $\psi(A)=\Pin\pi$
and there will be two cases depending on whether a pinnacle of $\pi$ comes from $A$ or $\Ab$ 

In the first case, suppose $a_i\in\Pin\pi$. Since $\pi$ is interleaved, this is equivalent to $a_i=\pi_{2i}>\pi_{2i+1}=\ab_{i+1}$.
 Recall that $a_i$ indexes the $i$th $D$ step of $L$, and similarly for $\ab_{i+1}$ and $U$ steps.
So the previous inequality is equivalent to step $s_{a_i}=D$ being preceded by more up steps than down steps.  And this is precisely the condition for  $a_i$ to be the index of a down step weakly above the $x$-axis, which means it is in $\phi(A)$.  Thus this case is complete.

In a similar manner, one proves that $\ab_i\in\Pin\pi$ if and only if $\ab_i$ is the index of an up step strictly below the $x$-axis.  This completes the second case and the proof.
\end{proof}

\section{Ballot numbers}
\label{bn}

Davis et al.\ derived a number of properties of the constants $\fp(m,d)$ which count the number of admissible pinnacle sets $S$ with $d$ elements and maximum $m$.  In this section we prove that these constants are, in fact, ballot numbers.  We give two proofs of this result.  In the first, we derive a formula for $\fp(m,d)$ using finite differences and then show that it agrees with the well-known expression for ballot numbers.  In the second, we give an explicit bijection between these admissible sets and ballot sequences.

Suppose we are given nonnegative integers $p>q$.  A {\em $(p,q)$ ballot sequence} is a permutation 
$\be=\be_1\be_2\ldots\be_{p+q}$
of $p$ copies of the letter $X$ and $q$ copies of the letter $Y$ such that in any nonempty prefix $\be_1\be_2\ldots\be_i$ the number of $X$'s is greater than the number of $Y$'s.
Let
$$
\cB_{p,q}=\{\be \mid \text{$\be$ is a $(p,q)$ ballot sequence}\}.
$$
The following result is well known.
\bth[\cite{and:sdp},\cite{ber:sp}]
\label{ballot}
For nonnegative integers $p>q$ we have

\vs{10pt}

\eqed{
\#\cB_{p,q}= \frac{p-q}{p+q} \binom{p+q}{q}.
}
\eth
Note that if we let $p=d+1$ and $q=d$ then the previous result gives
get
$$
\#\cB_{d+1,d} =\frac{1}{2d+1}\binom{2d+1}{d} = C_d
$$
where $C_d$ is the $d$th Catalan number.

Our first proof that the $\fp(m,d)$ are ballot numbers will use the theory of finite differences.  If $f(m)$ is a function of a nonnegative integer $m$ then its {\em forward difference} is the function $\De f$ defined by
$$
\De f(m) = f(m+1)-f(m).
$$
For a fixed $d\in\bbP$, define the following polynomial in $m$ of degree $d-1$
$$
p_d(m) = \frac{m-2d+1}{(d-1)!}\prod_{i=2}^{d-1}(m-i).
$$

\begin{lem}
\label{Delta}
The polynomial $p_d(m)$ satisfies 
$$
\Delta p_d(m) = p_{d-1}(m)
$$
and 
$$
p_d(2d+1) = C_d.
$$
\end{lem}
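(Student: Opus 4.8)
The plan is to establish both identities by direct computation from the closed product form of $p_d(m)$, with no recourse to induction or generating functions. Throughout I would take $d\ge 2$ (so that $p_{d-1}$ is defined), checking the smallest cases by hand if needed.

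For the first identity, I would write the two values in factored form
$$
p_d(m)=\frac{(m-2d+1)}{(d-1)!}(m-2)(m-3)\cdots(m-d+1),
\qquad
p_d(m+1)=\frac{(m-2d+2)}{(d-1)!}(m-1)(m-2)\cdots(m-d+2),
$$
and observe that both products share the common block $R=\prod_{i=2}^{d-2}(m-i)=(m-2)\cdots(m-d+2)$: the first carries the extra factor $(m-d+1)$ and the second the extra factor $(m-1)$. Pulling $R/(d-1)!$ out of the difference reduces the entire calculation to simplifying the bracket
$$
(m-2d+2)(m-1)-(m-2d+1)(m-d+1).
$$
The crux is to verify that this quadratic difference factors cleanly as $(d-1)(m-2d+3)$. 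Once that is checked, the surviving factor $(d-1)$ converts $(d-1)!$ into $(d-2)!$, and what remains is exactly $\frac{(m-2d+3)}{(d-2)!}\prod_{i=2}^{d-2}(m-i)=p_{d-1}(m)$, as required.

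For the second identity I would substitute $m=2d+1$. The leading factor collapses to $(m-2d+1)=2$, while the product $\prod_{i=2}^{d-1}(2d+1-i)$ is just the product of the consecutive integers from $d+2$ up to $2d-1$, namely $\frac{(2d-1)!}{(d+1)!}$. Hence
$$
p_d(2d+1)=\frac{2\,(2d-1)!}{(d-1)!\,(d+1)!}.
$$
It then remains to recognize the right-hand side as a Catalan number: using $2(2d-1)!=(2d)!/d$ and $d\,(d-1)!=d!$ yields $\frac{(2d)!}{d!\,(d+1)!}=\frac{1}{d+1}\binom{2d}{d}=C_d$.

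I expect the only genuine obstacle to be the bracket simplification in the first part. One must align the two shifted products so that precisely one extra linear factor survives on each side, and then carry out the degree-two cancellation carefully, so that the leftover linear polynomial comes out as $(d-1)(m-2d+3)$; it is exactly the appearance of the factor $(d-1)$ that makes the factorial divide out and reproduces $p_{d-1}(m)$ rather than some unwanted expression. Everything else is routine bookkeeping with factorials and the empty-product convention.
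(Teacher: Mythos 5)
Your proposal is correct and follows essentially the same route as the paper's proof: both factor out the common block $\prod_{i=2}^{d-2}(m-i)$, reduce the first identity to the factorization $(m-2d+2)(m-1)-(m-2d+1)(m-d+1)=(d-1)(m-2d+3)$, and prove the second by substituting $m=2d+1$ and manipulating factorials into $(2d)!/\bigl(d!(d+1)!\bigr)=C_d$. The quadratic factorization you flag as the crux does check out, so nothing is missing.
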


\begin{proof}
To prove the first equality, we compute
\begin{align*}
    \Delta p_d(m) & = p_d(m+1)-p_d(m)\\[5pt]
   & = \frac{m-2d+2}{(d-1)!}\ \prod_{i = 2}^{d-1}(m+1-i)- \frac{m-2d+1}{(d-1)!}\ \prod_{i = 2}^{d-1}(m-i)\\[5pt]
    & = \frac{(m-2d+2)(m-1)-(m-2d+1)(m-d+1)}{(d-1)!}\ \prod_{i = 2}^{d-2}(m-i)\\[5pt]
    & = \frac{(d-1)(m-2d+3)}{(d-1)!}\ \prod_{i = 2}^{d-2}(m-i)\\[5pt]
    & = p_{d-1}(m).
\end{align*}
For the second equality, we have
\begin{align*}
    p_d(2d+1)
    & = \frac{2}{(d-1)!}\ \prod_{i = 2}^{d-1}(2d+1-i)\\[5pt]
    & = \frac{2d}{d!}\cdot \frac{(2d-1)!}{(d+1)!}\\[5pt]
    & = \frac{(2d)!}{d!(d+1)!}\\[5pt]
    & = C_d
\end{align*}
which finishes the proof. 
\end{proof}

Note that by the criterion in Theorem~\ref{s_i>2i}, $\fp(m,d)$ can only be nonzero if $m>2d$.
\bth
If $m,d\in\bbP$ with $m>2d$ then $\fp(m,d)=p_d(m)$.  Thus
$$
\fp(m,d)=\frac{m-2d+1}{m-1}\binom{m-1}{d-1}=\#\cB_{m-d,d-1}.
$$
\eth
\begin{proof}
Induct on $d$ where the base case of $d=1$ is trivial to verify.  To finish the
first claim, it suffices to use the previous lemma and show that both
$\Delta \fp(m,d) = \fp(m,d-1)$
and $\fp(2d+1,d) = C_d$.  But these were proved in~\cite[Sections 2.2--2.3]{dnpt:psp}.  The first displayed equality now follows from simple manipulation of the definition of $p_d(m)$, while the second comes from Theorem~\ref{ballot}.
\end{proof}

We would like to give a bijective proof of the relationship between admissible pinnacle sets and ballot sequences from the previous theorem.  Let
$$
\fP(m,d)=\{S \mid \text{$S$ admissible with $\max S = m$  and $\#S=d$}\}
$$
so that $\#\fP(m,d)=\fp(m,d)$.
For $m>2d$, define a map
$$
\eta:\cB_{m-d,d-1}\ra\fP(m,d)
$$
by sending ballot sequence $\be=\be_1\be_2\ldots\be_{m-1}$ to
$$
\eta(\be)=\{i \mid \be_i=Y\} \uplus \{m\}.
$$
For example, if $m=9$, $d=3$ and $\be=XXXYXXYX$ then
$$
\eta(\be)=\{4,7\}\uplus\{9\} = \{4,7,9\}.
$$
\bth
The map $\eta$ is a well-defined bijection.
\eth
\bprf
We must first show that $\eta$ is well defined in that $\eta(\beta)\in\fP(m,d)$.  Since $\be\in\cB_{m-d,d-1}$ we see that the set
$\{i \mid \be_i=Y\}$ is contained in $[m-1]$ and has cardinality $d-1$.  It follows that $S=\eta(\be)$ has maximum $m$ and cardinality $d$.  

There remains to show that $S=\{s_1<s_2<\ldots<s_d\}$  is admissible.
By Theorem~\ref{s_i>2i}, it suffices to show that $s_i>2i$ for all $i$.  But $s_i$ is the index of the $i$th $Y$ in $\be$.  Since $\be$ is a ballot sequence, this $Y$ is preceded by $i$ copies of $Y$ (including itself) and at least $i+1$ copies of $X$.  
So $s_i\ge i+(i+1)=2i+1$ which is what we wished to prove.

To show that $\eta$ is a bijection, we create its inverse.
Given $S\in\fP(m,d)$ we define $\eta^{-1}(S)=\be=\be_1\be_2\ldots\be_{m-1}$ by letting
$$
\be_i = \case{X}{if $i\not\in S$,}{Y}{if $i\in S$.}
$$
The proof that $\eta^{-1}$ is well defined is similar to the one for $\eta$.  And proving that the compositions of $\eta$ with $\eta^{-1}$ are identity maps is easy.  So we are done.
\eprf

\section{Permutations with a given pinnacle set}
\label{pgp}

Given an admissible set $S$, there does not seem to be an expression for $p_S(n)$, the number of permutations in $\fS_n$ with $S$ as pinnacle set, analogous to the one in Theorem~\ref{bbs} for peak sets.  In~\cite{dnpt:psp}, they found expressions for $p_S(n)$ when $\#S\le 2$ as well as bounds for general $S$, and asked whether an exact formula could be given in the general case.  Such an expression was given in~\cite{dhhin:fep} as a summation.  In this section we will give another sum which is asymptotically more efficient.  In addition, this method can be extended to count $\#\cO(S)$, the number of admissible orderings of $S$.

Since our sum will involve a significant amount of new notation, we will collect it here and then explain its relevance afterwards.
Fix $n\in\bbP$. Suppose we have an admissible pinnacle set $S = \{s_1 < s_2 < \ldots < s_d\}$ for permutations in $\fS_n$. We use the convention $s_0 = 0$ and $s_{d+1} = n+1$ and let
$$n_i = s_{i+1}-s_{i}-1$$ 
for $0\le i\le d$.  Let
$$
D = \{1_l, 1_r, 2_l,2_r,\ldots, d_l, d_r\}
$$
and give the following total order to $D$'s elements
$$
1_l< 1_r<2_l< 2_r<\ldots< d_l< d_r.
$$
We call $i_l$ and $i_r$ the elements of {\em rank $i$} in $D$.
If $B\sbe D$ then we will let
$$
b=\#B
$$
and
$$
r_j =\text{the rank of the $j$th smallest element of $B$}
$$
for $1\le j\le b$.  We also define
$$
b_i  = \text{the number of elements in  $B$ with rank at least $i$}.
$$
Note that we always have $b_1 = b$
and $b_{d+1}=0$ since $d$ is the largest rank. For example, if $d=4$, then $D = \{1_l, 1_r, 2_l, 2_r, 3_l, 3_r, 4_l, 4_r\}$ and one possible $B$ might be $B = \{1_l, 3_l,3_r, 4_r\}$ which has $r_1 = 1, r_2 = 3, r_3 = 3, r_4 = 4$ and $b_1 = 4, b_2 = 3, b_3 = 3, b_4 = 1, b_5=0$.  We can now state the first main result of this section.

\begin{thm}
\label{main1}
Given $n\in\bbP$ and admissible  $S=\{s_1<s_2<\ldots<s_d\}$ we have
$$
p_S(n)=2^{n-2d-1}\sum_{B \subseteq D:\ |B|\leq d} (-1)^{b} (d-b)! \left(\prod_{i=0}^{b-1} \left( d+1-i-r_{b-i}\right)\right) \left(\prod_{i=0}^{d} (d+1-i-b_{i+1})^{n_i}\right).
$$
\end{thm}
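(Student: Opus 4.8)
The plan is to prove the formula by inclusion--exclusion, starting from a structural normal form for the permutations being counted and then correcting a clean over-count by signed terms indexed by the set $D$.

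\textbf{Setting up the model.} First I would record the zigzag structure forced by $\Pin\pi=S$: the values of $S$ occur as the $d$ interior peaks of $\pi$, and every non-pinnacle value lies on a descending or ascending run adjacent to the pinnacles that exceed it. A value $v$ with $s_i<v<s_{i+1}$ is smaller than exactly the $d-i$ pinnacles $s_{i+1},\dots,s_d$, so it may be attached below one of those pinnacles or placed in the outer boundary run, giving $d+1-i$ admissible landing ``slots''; this is the origin of the base $d+1-i$ raised to the power $n_i$ (recall $n_i$ counts the values in the open interval $(s_i,s_{i+1})$). I would thus encode each such $\pi$ by a left-to-right ordering of the $d$ pinnacle values, a slot assignment for every non-pinnacle value, and a residual piece of binary data.

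\textbf{The main term.} Ignoring the requirement that the nominal peaks actually be peaks, this encoding is free: there are $d!$ orderings of the pinnacles, $\prod_{i=0}^{d}(d+1-i)^{n_i}$ slot assignments, and $2^{\,n-2d-1}$ choices of binary data. This is exactly the $B=\emptyset$ term of the sum, and I would check the smallest cases (e.g.\ $d=1$, where the count collapses to $2^{n-2}(2^{m-2}-1)$) to confirm that it over-counts and that the signed correction below is precisely what is required.

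\textbf{Inclusion--exclusion.} The over-count arises when a nominal peak $s_j$ fails to be a peak because the side that should carry a smaller neighbour is empty or is blocked by a larger pinnacle. I would record such a failure by the symbol $j_l$ or $j_r$ according to the side, so that defects are exactly the elements of $D$, and then apply inclusion--exclusion over the set $B$ of forced defects, producing the sign $(-1)^{b}$. For fixed $B$ the contribution should factor into a slot count and an ordering count. A defect of rank $j$ forbids one landing slot for every value smaller than $s_j$, i.e.\ for every value in an interval $i<j$; since $b_{i+1}$ is exactly the number of defects of rank $>i$, the base for interval $i$ drops from $d+1-i$ to $d+1-i-b_{i+1}$, giving the second product. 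The same forced defects constrain the ordering: the $d-b$ unconstrained pinnacles may be ordered freely, contributing $(d-b)!$, while inserting the $b$ constrained pinnacles in decreasing order of rank leaves $d+1-i-r_{b-i}$ legal positions at the $i$-th step, giving the first product.

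\textbf{Main obstacle.} The delicate part is the last step's bookkeeping: I must show that the joint effect of the forced defects factors cleanly into the ordering factor times the slot factor with no cross terms, that defects on the two sides of one pinnacle (the two rank-$j$ symbols $j_l,j_r$) are correctly double-counted by $b_{i+1}$, and that genuinely impossible configurations contribute $0$ (which should happen automatically once some base $d+1-i-b_{i+1}$ vanishes while $n_i>0$). Verifying that the ordering count for fixed $B$ is exactly $(d-b)!\prod_{i=0}^{b-1}(d+1-i-r_{b-i})$, rather than something entangling the ranks, is where I expect the real work to lie; I would organize that verification as a direct insertion argument processing the ranks of $B$ from largest to smallest.
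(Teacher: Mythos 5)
Your proposal follows the same route as the paper's proof: an inclusion--exclusion over the set $D$ of potential defects (the paper's ``dales''), a second product whose bases drop by $b_{i+1}$ because each forced-empty dale of higher rank removes landing options for the small values, and a first product times $(d-b)!$ counting the pinnacle orderings compatible with a fixed defect set $B$, proved exactly as you propose by processing the ranks of $B$ from largest to smallest and adjoining a taller pinnacle at each step (this is the paper's Lemma~\ref{[tau]forB}, whose delicate case is the one you flag, where both $j_l$ and $j_r$ lie in $B$). The only structural difference is cosmetic: the paper first converts to cyclic permutations in $[\fS_{n+1}]$ with pinnacle set $S\cup\{n+1\}$ (Lemma~\ref{circ}), which makes your ad hoc ``outer boundary run'' into a genuine $(d+1)$-st pinnacle and eliminates all boundary cases.

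There is, however, a genuine gap in your handling of the factor $2^{n-2d-1}$. You assert that, once the peak requirement is dropped, each configuration is freely encoded by an ordering, a slot assignment, and $n-2d-1$ bits of ``residual binary data.'' That is false as stated. The natural free datum is a left/right side choice for \emph{every} non-pinnacle, i.e.\ a factor $2^{n_i}$ in each interval (this is how the paper's Lemma~\ref{PlaFixOrd} is set up), and the map from such ``placements'' to permutations is $2^k$-to-one where $k$ is the number of non-empty valleys of the resulting permutation. This redundancy is \emph{not} uniformly $2^{d+1}$ across the degenerate configurations that appear inside the inclusion--exclusion, so the main term with $2^{n-2d-1}$ baked in does not count any natural set of objects, and the term-by-term combinatorial interpretation you give cannot be made rigorous in that form. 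The fix is the paper's ordering of operations: run the PIE entirely at the level of placements, keeping the $2^{n_i}$'s; observe that the resulting total counts placements in which all $d+1$ valleys are non-empty; and only then divide by $2^{d+1}$, which is legitimate because each cyclic permutation with pinnacle set $S\cup\{n+1\}$ arises from exactly $2^{d+1}$ such placements (each of the $d+1$ valley minima may sit in either adjacent factor). Since $2^{-d-1}\prod_{i=0}^{d}2^{n_i}=2^{n-2d-1}$ is constant across terms, this recovers your formula; but the division must be justified once, on the final count, rather than being distributed into each signed term as free encoding data.
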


To prove this, it will be convenient to convert the linear permutations we have been studying into cyclic ones in order to avoid considering boundary cases.
Given a linear permutation $\pi=\pi_1\pi_2\ldots\pi_n$ the corresponding {\em cyclic permutation} is the set of permutations
$$
[\pi]=\{\pi_1\pi_2\ldots\pi_n,\hs{10pt} \pi_2\ldots\pi_n\pi_1,\hs{10pt} \ldots,\hs{10pt} \pi_n\pi_1\ldots\pi_{n-1}\}.
$$
Intuitively, we think of $[\pi]$ as the result of arranging the elements of $\pi$ on a circle.  
Let 
$$
[\fS_n]=\{[\pi] \mid \pi\in\fS_n\}.
$$
For example if $\pi=1324$ then
$$
[\pi]= \{1324,\ 3241,\ 2413,\ 4132\}.
$$
We are also using the bracket notation in $[n]$ where $n\in\bbN$ but this should not cause any confusion.  Cyclic permutations are of interest in part because of their relation with pattern avoidance, standard Young tableaux, quasisymmetric functions, and other mathematical objects~\cite{agrr:cqf,cal:pac,dlmsss:cpca,dlmsss:csc,glw:pcc1,glw:pcc2}.

We define the {\em pinnacle set} of $[\pi]=[\pi_1\pi_2\ldots\pi_n]$ to be 
$$
\Pin[\pi]=\{\pi_i \mid \text{$\pi_{i-1}<\pi_i>\pi_{i+1}$ where subscripts are taken modulo $n$}\}.
$$
Continuing our example from the last paragraph
$$
\Pin[1324]=\{3,4\}.
$$
Note in particular that $\Pin[12]=\{2\}$ and, more generally, $n\in\Pin[\pi]$ for any $[\pi]\in[\fS_n]$ where $n\ge2$.
\begin{lem}
\label{circ}
For $n\in\bbP$, there is a bijection between linear permutations in $\fS_n$ with pinnacle set $S$ and cyclic permutations  in
$[\fS_{n+1}]$ with 
pinnacle set $S' = S \cup \{n+1\}$.
\end{lem}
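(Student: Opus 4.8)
The plan is to realize the bijection by inserting the new maximum $n+1$ and reading the result cyclically. Given a linear permutation $\pi=\pi_1\pi_2\ldots\pi_n\in\fS_n$ with $\Pin\pi=S$, I would define
$$
f(\pi)=[(n+1)\,\pi_1\pi_2\ldots\pi_n],
$$
the cyclic permutation obtained by prepending $n+1$ and closing the word into a circle. The first task is to verify that $f$ is well defined at the level of pinnacle sets, namely that $\Pin f(\pi)=S\cup\{n+1\}=S'$.

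For this I would compare cyclic and linear neighbors position by position. In the circle the two cyclic neighbors of $n+1$ are $\pi_n$ and $\pi_1$, both smaller, so $n+1$ is always a cyclic pinnacle; this produces exactly the extra element of $S'$. For an interior value $\pi_i$ with $2\le i\le n-1$, the cyclic neighbors $\pi_{i-1}$ and $\pi_{i+1}$ coincide with the linear ones, so $\pi_i$ is a cyclic pinnacle precisely when it is a linear pinnacle. The only values demanding care are $\pi_1$ and $\pi_n$: each is adjacent to $n+1$ in the circle and hence cannot be a cyclic pinnacle, which matches the fact that the first and last entries of a linear permutation are never linear pinnacles. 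Combining these observations yields $\Pin f(\pi)=\Pin\pi\cup\{n+1\}=S'$, so $f$ maps into the desired set.

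To invert $f$, I would exploit that $n+1$ is the unique largest entry, so each cyclic permutation $[\tau]\in[\fS_{n+1}]$ has exactly one rotation $\tau=(n+1)\tau_2\ldots\tau_{n+1}$ beginning with $n+1$. I then set $g([\tau])=\tau_2\tau_3\ldots\tau_{n+1}$, which is a word on $[n]$ and hence an element of $\fS_n$. The same neighbor analysis run in reverse shows that if $\Pin[\tau]=S'$ then $\Pin g([\tau])=S'\setminus\{n+1\}=S$, so $g$ lands in the correct set. Finally $f$ and $g$ are mutually inverse: $g(f(\pi))$ recovers $\pi$ because the rotation of $f(\pi)$ starting with $n+1$ is precisely $(n+1)\pi_1\ldots\pi_n$, and $f(g([\tau]))=[\tau]$ since prepending $n+1$ to $\tau_2\ldots\tau_{n+1}$ reproduces the chosen rotation. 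This establishes the bijection.

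The computation is elementary, and I do not anticipate a substantive obstacle. The only points that require genuine attention are the boundary entries $\pi_1$ and $\pi_n$, where one must confirm that the linear convention of having no pinnacle at the ends agrees with the cyclic picture once $n+1$ is placed between them, together with the observation that the rotation beginning at $n+1$ is unique, which is exactly what makes $g$ single-valued.
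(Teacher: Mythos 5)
Your proposal is correct and is essentially the paper's own argument: prepending $n+1$ and closing into a circle gives the same cyclic class as the paper's ``append $n+1$ to the end,'' and your inverse (cut at the unique occurrence of $n+1$) is exactly how the paper's map is inverted. You simply spell out the neighbor-by-neighbor verification that the paper compresses into ``does not destroy or create any pinnacles,'' which is a fine level of detail but not a different method.
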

\begin{proof}
Given a linear $\pi$, append the element $n+1$ to the end of $\pi$ 
and take the corresponding equivalence class in $\fS_{n+1}$ to form an element of $[\fS_{n+1}]$.
The map is clearly invertible and does not destroy or create any pinnacles for elements in $[n]$.  Since $n+1\ge2$, we know that $n+1$ will become a pinnacle. Therefore the map has the desired properties concerning the pinnacle set.
\end{proof}

Consider some admissible pinnacle set $S = \{s_1 < s_2 < \ldots < s_d\}$. Given the above lemma, we may count the number of permutations in $\fS_n$ with pinnacle set $S$ by counting the number of cyclic permutations $[\pi] \in [\fS_{n+1}]$ with pinnacle set $S' = S \cup \{n+1\}$ where we let $s_{d+1} = n+1$. Therefore, much of what follows will be in regards to cyclic permutations with pinnacle set $S'$.

A {\em factor} of a (cyclic) permutation is a subsequence of consecutive elements.
We may attempt to construct a $[\pi]$ with pinnacle set $S'$ by first putting the elements of $S'$ in some cyclic order, and then placing all elements in $\overline{S'}  = [n+1]- S'$ into either decreasing factors starting with some $s_i$, or into increasing factors ending with  some $s_i$. 
Such a $[\pi]$ will then be completely determined by the increasing/decreasing factors that each element of $\overline{S'}$ falls into, and we will call every such  assignment a \textit{placement}. Note that it is possible for multiple placements to result in the same permutation since each vale (an element of $[\pi]$ smaller than the elements on either side) can be part of the factor on either side. For example, start with a desired pinnacle set $\{4,5\}$  and  place non-pinnacles between these  elements to form the cyclic permutation $[\pi]=[14325]$. Then $[\pi]$ would be associated with a placement where the decreasing factor starting with $4$ is $43$ and the increasing factor ending with $5$ is  $25$.  But it would also be associated with a placement having these factors be $432$ and $5$, respectively.

It is also possible, depending on the placement, that $[\pi]$ will not have pinnacle set $S'$ if no sufficiently small elements are placed between two pinnacles. In our example above, this could have happened if we had placed $1$, $2$ and $3$ all in the increasing factor ending in $5$, resulting in the cyclic permutation $[41235]$ in which only $5$ is a pinnacle. It is true, however, that any $[\pi]$ so constructed will have a pinnacle set that is a subset of $S'$ since every non-pinnacle was placed so that its factor contains an $s_i$ which is the largest element. For our arguments, we will focus on counting placements and then convert them into permutations later. 

Fix a cyclic ordering of the pinnacle indices and write it as $[\tau] = [\tau_1\cdots\tau_{d+1}] \in [\fS_{d+1}]$. An example is shown in Figure~\ref{DalFig} where $\tau=[7612354]$.
Now given a placement consistent with this ordering, for every space between two adjacent elements in $[\tau]$ define the {\em dale set} of this placement to consist of all elements between the two corresponding pinnacles that are also smaller than both pinnacles. 
So in Figure~\ref{DalFig} the dales are outlined by triangles with solid lines as sides.
If $s_i$ is the smaller of the two pinnacles, then we say that the dale has \textit{rank $i$}. Note that the rank is from the index of $s_i$ and not its actual value.  We will further denote the rank as either $i_l$ or $i_r$ depending on whether the dale is to the left, or right of the pinnacle $s_i$. In Figure~\ref{DalFig} the dale ranks are given along the $x$-axis. Define the \textit{dale rank set} $D_{[\tau]}$ to be the set of the dale ranks of $[\tau]$.  And define the \textit{master dale rank set} to be 
$$
D = \{1_l, 1_r, 2_l,2_r,\ldots, d_l, d_r\}
$$
so that $D\spe D_{[\tau]}$ for all $[\tau]$.
 In Figure \ref{DalFig}, we have that $D_{[\tau]} = \{1_l, 1_r, 2_r, 3_r, 4_1, 4_r, 6_l\}$ while $D = \{1_l, 1_r, 2_l, 2_r, \ldots, 6_l, 6_r\}$. Note that, by our definitions, there will be no dales in the case where $d=0$.

Clearly $D_{[\tau]}$ will be a subset of $D$ consisting of exactly $d+1$ elements if $d>0$, and empty otherwise. We can derive further information about $D_{[\tau]}$ if we want, such as how it will always contain both $1_l$ and $1_r$ if $d>0$, how it will never contain both $d_l$ and  $d_r$ if $d>1$, and how  $D_{[\tau]}$ will never be able to have certain combinations of the higher ranked dales. These facts are not necessary for proving our formula, although further analysis of them might help to improve its efficiency.

\begin{figure}
    \centering
    \begin{tikzpicture}
\usetikzlibrary{decorations.pathreplacing}
\draw[step=1.0,black,thin, dotted] grid (14,7);
\draw[thick, dotted] (0,7)--(1,0)--(2,6)--(3,0)--(4,1)--(5,0)--(6,2)--(7,0)--(8,3)--(9,0)--(10,5)--(11,0)--(12,4)--(13,0)--(14,7);
\fill(0,7) circle(.1);
\draw(0,7.3) node{$s_7$};
\fill(2,6) circle(.1);
\draw(2,6.3) node{$s_6$};
\fill(4,1) circle(.1);
\draw(4,1.3) node{$s_1$};
\fill(6,2) circle(.1);
\draw(6,2.3) node{$s_2$};
\fill(8,3) circle(.1);
\draw(8,3.3) node{$s_3$};
\fill(10,5) circle(.1);
\draw(10,5.3) node{$s_5$};
\fill(12,4) circle(.1);
\draw(12,4.3) node{$s_4$};
\fill(14,7) circle(.1);
\draw(14,7.3) node{$s_7$};
\draw[thick] (2,6) -- (1,0) -- (1/7,6);
\draw[thick] (1/7,6) -- (2,6);
\draw[thick] (4,1) -- (3,0) -- (2+5/6,1);
\draw[thick] (2+5/6,1) -- (4,1);
\draw[thick] (4,1) -- (5,0) -- (5+1/2,1);
\draw[thick] (4,1) -- (5+1/2,1);
\draw[thick] (6,2) -- (7,0) -- (7+2/3,2);
\draw[thick] (6,2) -- (7+2/3,2);
\draw[thick] (8,3) -- (9,0) -- (9+3/5,3);
\draw[thick] (8,3) -- (9+3/5,3);
\draw[thick] (12,4) -- (11,0) -- (10+1/5,4);
\draw[thick] (12,4) -- (10+1/5,4);
\draw[thick] (12,4) -- (13,0) -- (13+4/7,4);
\draw[thick] (12,4) -- (13+4/7,4);
\draw(1,-0.3) node{$6_l$};
\draw(3.0,-0.3) node{$1_l$};
\draw(5.0,-0.3) node{$1_r$};
\draw(7,-0.3) node{$2_r$};
\draw(9.0,-0.3) node{$3_r$};
\draw(11,-0.3) node{$4_l$};
\draw(13,-0.3) node{$4_r$};
\draw [decorate,decoration={brace,amplitude=5pt},xshift=-5pt,yshift=0pt]
(0,0) -- (0,0.9) node [black,midway,xshift=-0.9cm,yshift=0.1cm] {\footnotesize $n_0$} node [black,midway,xshift=-0.9cm,yshift=-0.1cm] {\footnotesize elements};
\draw [decorate,decoration={brace,amplitude=5pt},xshift=-5pt,yshift=0pt]
(0,1) -- (0,1.9) node [black,midway,xshift=-0.9cm,yshift=0.1cm] {\footnotesize $n_1$} node [black,midway,xshift=-0.9cm,yshift=-0.1cm] {\footnotesize elements};
\draw [decorate,decoration={brace,amplitude=5pt},xshift=-5pt,yshift=0pt]
(0,2) -- (0,2.9) node [black,midway,xshift=-0.9cm,yshift=0.1cm] {\footnotesize $n_2$} node [black,midway,xshift=-0.9cm,yshift=-0.1cm] {\footnotesize elements};
\draw [decorate,decoration={brace,amplitude=5pt},xshift=-5pt,yshift=0pt]
(0,3) -- (0,3.9) node [black,midway,xshift=-0.9cm,yshift=0.1cm] {\footnotesize $n_3$} node [black,midway,xshift=-0.9cm,yshift=-0.1cm] {\footnotesize elements};
\draw [decorate,decoration={brace,amplitude=5pt},xshift=-5pt,yshift=0pt]
(0,4) -- (0,4.9) node [black,midway,xshift=-0.9cm,yshift=0.1cm] {\footnotesize $n_4$} node [black,midway,xshift=-0.9cm,yshift=-0.1cm] {\footnotesize elements};
\draw [decorate,decoration={brace,amplitude=5pt},xshift=-5pt,yshift=0pt]
(0,5) -- (0,5.9) node [black,midway,xshift=-0.9cm,yshift=0.1cm] {\footnotesize $n_5$} node [black,midway,xshift=-0.9cm,yshift=-0.1cm] {\footnotesize elements};
\draw [decorate,decoration={brace,amplitude=5pt},xshift=-5pt,yshift=0pt]
(0,6) -- (0,6.9) node [black,midway,xshift=-0.9cm,yshift=0.1cm] {\footnotesize $n_6$} node [black,midway,xshift=-0.9cm,yshift=-0.1cm] {\footnotesize elements};
\end{tikzpicture}
\caption{Example of a pinnacle set ordering $[\tau] = [7612354]$ with corresponding dales.}
\label{DalFig}
\end{figure}
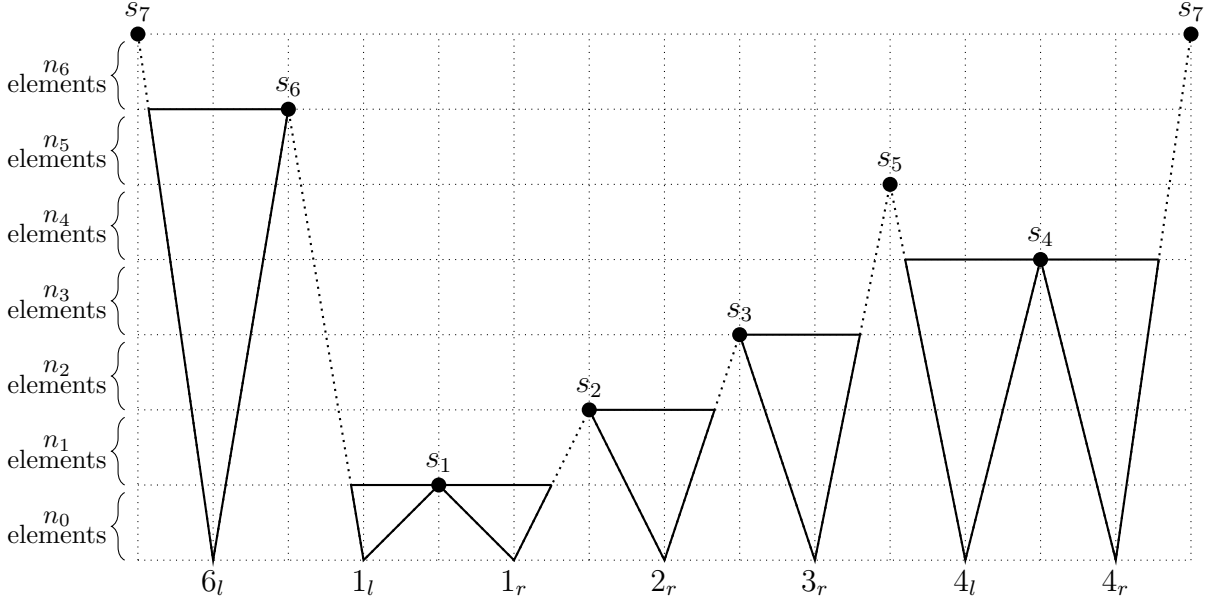

\begin{lem}
\label{LinCir}
For $n\in\bbP$, a given placement will correspond to a permutation $[\pi] \in [\fS_{n+1}]$ with pinnacle set $S'$ if and only if every dale is non-empty.
\end{lem}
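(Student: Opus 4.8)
The plan is to lean on the fact, established in the discussion preceding the lemma, that every $[\pi]$ arising from a placement already satisfies $\Pin[\pi]\sbe S'$. Thus it suffices to show that \emph{every} element of $S'$ is a genuine pinnacle of $[\pi]$ exactly when every dale is non-empty, and I would do this by analyzing, one pinnacle at a time, the two immediate neighbors of each $s_i$ in the constructed permutation.

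First I would record the shape of $[\pi]$ inside a single space. Between two consecutive pinnacles $s_a$ (on the left) and $s_b$ (on the right) in the cyclic order $[\tau]$, the placement puts the elements of $s_a$'s decreasing factor immediately to the right of $s_a$ in decreasing order, followed by the elements of $s_b$'s increasing factor in increasing order, ending at $s_b$; so the space reads as a descent from $s_a$ down to a valley and then an ascent up to $s_b$. Every element of this space is smaller than $\max(s_a,s_b)$, and the dale of the space is exactly the set of elements smaller than $\min(s_a,s_b)$.

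The key local claim I would then isolate is: the neighbor of the \emph{larger} of $s_a,s_b$ across this space is automatically smaller than it, since all elements of the space lie below it; whereas the neighbor of the \emph{smaller} pinnacle, say $s_i=\min(s_a,s_b)$, is smaller than $s_i$ if and only if the space contains an element below $s_i$, i.e.\ if and only if the dale is non-empty. The forward direction is immediate, and for the converse I would split on whether $s_i$'s own factor in this space is empty: if it is non-empty, its top element is the facing neighbor and lies below $s_i$; if it is empty, the facing neighbor is the smallest element of the opposite factor, which drops below $s_i$ precisely when some element of the space does (and if the space is empty, the neighbor is the larger pinnacle, which is bigger). I expect this case analysis, making sure the facing neighbor of the smaller pinnacle genuinely detects a sub-$s_i$ element regardless of which factor that element landed in, to be the only delicate point.

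Finally I would assemble the global statement. Since $s_i$ is a pinnacle of $[\pi]$ iff both of its neighbors are smaller, and each neighbor condition is governed by one adjacent space via the local claim, $s_i$ fails to be a pinnacle exactly when it is the smaller pinnacle of an adjacent space whose dale is empty. As the rank of each dale is by definition the index of the smaller of its two bordering pinnacles, the dales of rank $i$ are precisely the spaces in which $s_i$ is the smaller pinnacle (and $s_{d+1}=n+1$, being the global maximum, is never such a pinnacle, consistent with ranks running only up to $d$). Hence every element of $S'$ is a genuine pinnacle iff every dale is non-empty, which together with $\Pin[\pi]\sbe S'$ yields $\Pin[\pi]=S'$ iff every dale is non-empty, as desired.
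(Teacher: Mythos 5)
Your proposal is correct and follows essentially the same route as the paper's proof: both use the fact that a placement always yields $\Pin[\pi]\sbe S'$, and then check each $s_i$ locally, noting that an empty dale forces the smaller bounding pinnacle to have a larger neighbor, while non-empty dales guarantee every element of $S'$ has smaller elements on both sides. The only difference is that you spell out the case analysis of which factor the facing neighbor comes from, which the paper leaves implicit in its terser two-sentence argument.
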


\begin{proof}
First, suppose $d=0$. In this case, the theorem is trivial since there are no dales.  And every placement will automatically result in only one pinnacle, namely $n+1$, as long as $n>0$.

Now suppose $d>0$. Clearly if any dale of rank $i$ (whether left or right) is empty, then the pinnacle $s_i$ will have no smaller elements between itself and the higher pinnacle next to it, which will force $s_i$ to not be a pinnacle. On the other hand, if all dales have at least one element, then the space between any two pinnacles will always contain an element smaller than both, and all elements of $S'$ will in fact be pinnacles.
\end{proof}

We can now enumerate all placements corresponding to a given cyclic ordering of the indices of the pinnacle set $S'$.

\begin{lem}
\label{PlaFixOrd}
Given an admissible pinnacle set $S'$, fix an order $[\tau]$ of the pinnacle indices. The total number of placements with order $[\tau]$ that will result in a permutation with pinnacle set $S'$ is given by 
$$
\sum_{B \sbs D_{{[\tau]}}} (-1)^{b}\prod_{i=0}^{d} 2^{n_i}(d+1-i-b_{i+1})^{n_i}
$$ 
where $b$, $d$, the $b_i$, and the $n_i$ are defined above.
\end{lem}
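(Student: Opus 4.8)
The plan is to prove the formula by an inclusion--exclusion argument over the collection of dales. By Lemma~\ref{LinCir}, a placement consistent with the fixed cyclic order $[\tau]$ produces a permutation with pinnacle set exactly $S'$ if and only if every dale is non-empty. So the number we want equals the number of placements with order $[\tau]$ in which each of the $d+1$ dales indexed by $D_{[\tau]}$ receives at least one element. Writing $N(B)$ for the number of placements in which all dales whose rank lies in $B$ are empty, with no condition imposed on the remaining dales, the sieve gives
$$
\sum_{B\sbe D_{[\tau]}} (-1)^{b} N(B),
$$
so it suffices to show that $N(B)=\prod_{i=0}^{d} 2^{n_i}(d+1-i-b_{i+1})^{n_i}$.

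To compute $N(B)$ I would count placements element by element. Each non-pinnacle $v$ with $s_i<v<s_{i+1}$ must be assigned to one of the factors belonging to a pinnacle exceeding it, namely to the left or right factor of one of $s_{i+1},\ldots,s_{d+1}$; this gives $2(d+1-i)$ available choices in the unconstrained count, which is exactly the $B=\emp$ term. The key bookkeeping observation is that the $2(d+1)$ possible factors are in bijection with the (gap, side-of-gap) pairs: each gap between two consecutive pinnacles in $[\tau]$ contributes exactly two factors, the decreasing factor of its left pinnacle and the increasing factor of its right pinnacle. Consequently every factor lies in a unique gap, and assigning $v$ to a factor of a gap places $v$ into that gap's dale precisely when $v$ is smaller than both bounding pinnacles, i.e.\ smaller than the gap's smaller pinnacle.

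From this I would read off exactly which assignments become forbidden when we require the dales in $B$ to be empty. A dale of rank $i$ has smaller pinnacle $s_i$ and larger pinnacle one of its neighbors; emptying it forbids, for every element smaller than $s_i$, the two factors of its gap (one belonging to $s_i$, the other to the larger neighbor). For a fixed $v$ in range $i$ both of these factors are genuinely among its available choices exactly when the dale's rank is at least $i+1$, since then both bounding pinnacles exceed $v$. Because distinct gaps own disjoint pairs of factors, the assignments forbidden by different dales of $B$ never coincide. Hence emptying the dales of $B$ removes exactly $2b_{i+1}$ of the $2(d+1-i)$ choices for $v$, leaving $2(d+1-i-b_{i+1})$; taking the product over the $n_i$ elements in each range $i$ yields the claimed value of $N(B)$, and substituting into the sieve completes the proof.

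I expect the main obstacle to be the careful justification of the disjointness and of the exact count $2b_{i+1}$: one must verify both that each forbidden pair of factors lies inside the available set for $v$ so that the two choices are truly removed, and that a dale of rank at most $i$ imposes no restriction on an element of range $i$, since such an element is not smaller than that dale's smaller pinnacle and so can never enter its dale. The degenerate case $d=0$, where $D_{[\tau]}=\emp$ and the sum collapses to the single term $2^{n_0}$, should be noted separately but follows immediately from Lemma~\ref{LinCir}.
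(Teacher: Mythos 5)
Your proposal is correct and follows essentially the same route as the paper: an inclusion--exclusion over subsets $B$ of dales, where the count of placements leaving the dales of $B$ empty is obtained by noting that each of the $n_i$ elements in range $i$ has $2(d+1-i)$ available factors, of which exactly two are removed for each required-empty dale of rank at least $i+1$. Your extra bookkeeping (the bijection between factors and gap--side pairs, and the disjointness of forbidden pairs across distinct gaps) is a more careful justification of the step the paper states briefly, not a different argument.
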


\begin{proof}
We will use the Principle of Inclusion and Exclusion or PIE. We let our universal set be all possible placements with no restrictions. We then wish to exclude any placement where at least one dale is empty. Therefore, if $B$ is some subset of the dales, we must be able to count the number of placements where all dales in $B$ (and possibly others) are empty.

First consider the case when $B=\emp$.  There are $2(d+1)$ factors of which $2i$ only exist below $s_i$.  So each of the $n_i$ non-pinnacles between $s_i$ and $s_{i+1}$ may be placed in any of the $2(d+1-i)$ factors that are long enough to extend above  $s_i$. 
As an example, in Figure~\ref{LinCir} if we look between the horizontal boundary lines for the elements counted by $n_2$ we see there are $10=2(6+1-2)$ such factors represented by the diagonal lines (solid or dotted) which intersect the region.

For non-empty $B$, each dale of rank at least $i+1$ that we require to be empty will result in a loss of two additional factors, and so there are only $2(d+1-i-b_{i+1})$ choices. Therefore, for a given $B$, the total number of placements guaranteeing the dales in $B$ are empty is 
$$
\prod_{i=0}^{d} 2^{n_i}(d+1-i-b_{i+1})^{n_i}.
$$

To use the PIE, we must also attach the sign $(-1)^{|B|} = (-1)^{b}$ to this term before summing. Therefore, given a fixed order $[\tau]$ of the pinnacle indices of $S'$, we have that the total number of placements that will result in a permutation with pinnacle set $S'$ is
$$\sum_{B \subseteq D_{[\tau]}} (-1)^{b}\prod_{i=0}^{d} 2^{n_i} (d+1-i-b_{i+1})^{n_i}.$$
Finally, when $B=D_{[\tau]}$ then $b_1=\#B=d+1$.  So we can ignore this term because the product has a factor of $d+1-b_1=0$.
\end{proof}

The above formula must be summed over all possible $[\tau]$ to give a final count for the number of $[\pi]$ with $\Pin[\pi]=S'$. This  results in computationally expensive double sum.  Also,  note that in the above formula there may be multiple $B$  resulting in the same term. For example, 
$\{1_l, 2_r, 5_l\}$ is not the same as $\{1_r, 2_r, 5_l\}$ even though both produce the same $b_i$. We will take care of this redundancy when we optimize our formula below.

To fix the double sum problem, note that each $B$ in Lemma~\ref{PlaFixOrd} is a subset of the master dale rank set $D$. We will fix some subset $B \subseteq D$ and count the number of orderings $[\tau]$ that will produce a $D_{[\tau]}$ which can have $B$ as a subset. This will allow us to just sum over all subsets $B \subseteq D$ without having to keep track of $[\tau]$. Furthermore, we only have to sum over the subsets $B$ of cardinality at most $d$ since requiring more than $d$ dales to be empty is impossible for an admissible pinnacle set.

\begin{lem}
\label{[tau]forB}
Fix some $B\sbe D$ with $|B|\leq d$. The number of orderings $[\tau]$ that will produce a $D_{[\tau]}$ such that $B \sbe D_{[\tau]}$  is given by 
$$(d-b)! \prod_{i=0}^{b-1} \left(d+1-i-r_{b-i}\right)$$
where $b$, $d$, and the $r_i$ are defined as above.
\end{lem}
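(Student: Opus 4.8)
The plan is to build each cyclic ordering $[\tau]$ by inserting the $d+1$ pinnacles onto the circle one at a time in decreasing order of value, and to read the condition $B\sbe D_{[\tau]}$ off as a rule that forbids certain insertions. Concretely, start with $s_{d+1}$ alone and then insert $s_d,s_{d-1},\ldots,s_1$, at each step choosing one of the gaps between consecutive already-placed elements; this records every $[\tau]$ exactly once, so the unconstrained count is $\prod_{i=1}^{d}(d+1-i)=d!$. Because elements are placed in decreasing order, when $s_i$ is inserted both of its current neighbors are larger. The observation I would establish is that $i_l\in D_{[\tau]}$ if and only if the final left neighbor of $s_i$ is larger than $s_i$, which in turn holds if and only if the gap created immediately to the left of $s_i$ never receives a (necessarily smaller) later element; the symmetric statement holds for $i_r$. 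Hence $B\sbe D_{[\tau]}$ precisely when, declaring the left (resp.\ right) gap of $s_i$ \emph{forbidden} whenever $i_l\in B$ (resp.\ $i_r\in B$), the insertion process never places an element into a forbidden gap.

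Next I would check that counting forbidden-avoiding insertion sequences is clean. A forbidden gap is never split, since nothing is inserted there, so forbidden gaps persist and stay distinct; thus just before $s_i$ is inserted the number of forbidden gaps equals the number of elements of $B$ of rank $>i$, which is exactly $b_{i+1}$, while the total number of gaps is $d+1-i$. So the number of admissible choices for $s_i$ is $d+1-i-b_{i+1}$, independently of earlier choices, and multiplying over $i=1,\ldots,d$ gives
$$
\#\{[\tau] \mid B\sbe D_{[\tau]}\}=\prod_{i=1}^{d}(d+1-i-b_{i+1}).
$$
If some required dale cannot be emptied, for instance when both $d_l,d_r\in B$, one factor is $0$ and the product correctly evaluates to $0$; since consecutive factors differ by at most $1$ and the factor at $i=d$ equals $1$, any negative factor is preceded by a vanishing one, so the product is never spuriously positive.

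It then remains to reconcile this product with the stated form $(d-b)!\prod_{i=0}^{b-1}(d+1-i-r_{b-i})$, equivalently $(d-b)!\prod_{k=1}^{b}(d+1-b+k-r_k)$ after the substitution $k=b-i$. Writing $b_{i+1}=b-g(i)$ with $g(i)=\#\{k\mid r_k\le i\}$ rewrites each factor as $(d+1-b)-i+g(i)$, and I would prove the multiset identity
$$
\{(d+1-b)-i+g(i) \mid 1\le i\le d\}=\{1,2,\ldots,d-b\}\uplus\{(d+1-b+k)-r_k \mid 1\le k\le b\},
$$
after which the two products agree. This multiset identity is where the real work lies: setting $c_i\in\{0,1,2\}$ to be the number of elements of $B$ of rank $i$ (so $\sum_i c_i=b$), the factors are the heights of a Motzkin-type walk whose $i$-th increment is $c_i-1$, and the genuinely delicate case is $c_i=2$ (both $i_l,i_r\in B$), where a single insertion step must supply two of the $b$ factors indexed by $k$. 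I expect to resolve this by an explicit factor-by-factor matching (or a short induction on $b$ that removes the largest rank of $B$), routine once the bookkeeping for repeated ranks is arranged; this reconciliation is the main obstacle, whereas the insertion bijection and the forbidden-gap count are the conceptual core.
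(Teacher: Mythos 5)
Your first step is correct and is a genuinely different route from the paper's. The paper never passes through the product $\prod_{i=1}^{d}\bigl(d+1-i-b_{i+1}\bigr)$ at all: it builds $[\tau]$ by adjoining pairs of pinnacles, processing the ranks of $B$ from largest to smallest, so that the choice made for $r_{b-i}$ contributes the factor $d+1-i-r_{b-i}$ directly (one previously formed adjacency becomes unavailable at each stage, whether or not ranks repeat), and then arranges the resulting $d+1-b$ strings in a circle in $(d-b)!$ ways; the stated formula thus appears with no algebraic reconciliation. Your decreasing-value insertion with forbidden gaps is sound up through the identity
$$
\#\{[\tau] \mid B\subseteq D_{[\tau]}\}=\prod_{i=1}^{d}\bigl(d+1-i-b_{i+1}\bigr),
$$
including the persistence and distinctness of forbidden gaps (distinctness does need the observation that two designated pinnacles can never become adjacent along their designated sides, since that adjacency could only be created by an insertion into a forbidden gap) and the remark that a negative factor is always preceded by a zero factor.

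The genuine gap is that this is not the formula the lemma asserts, and the reconciliation --- which you yourself call ``the main obstacle'' and ``where the real work lies'' --- is never carried out. As written, you have proved a different (if correct) expression, not the statement $(d-b)!\prod_{i=0}^{b-1}(d+1-i-r_{b-i})$. Moreover, the two routes you sketch do hit real trouble: the natural matching sending position $i=r_k$ to the $k$th factor $(d+1-b+k)-r_k$ works only when the ranks are distinct (if $r_k=r_{k+1}$ then the position $i=r_k$ produces the $(k+1)$st factor, and the $k$th must be recovered elsewhere), and an induction on $b$ that deletes the largest rank needs to know \emph{which positions} of the multiset lie below $r_b$, positional data the multiset identity does not carry. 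The identity is true, and your approach is salvageable: a cleaner induction is on $d$, deleting rank $1$ and shifting all ranks down by one, since with $c_1$ the number of rank-$1$ elements of $B$ both sides are multiplied by exactly the factor $d-b+c_1$ in each of the cases $c_1=0,1,2$. But until some such argument is supplied, the lemma as stated is not proved.
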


\begin{proof}
We will start by viewing all $d+1$ pinnacles as separate and then adjoin them in pairs in such a way so that the desired dales are formed.  Here, ``adjoining a pair of pinnacles" means requiring that they be adjacent in $[\tau]$.

We start with the dale of rank $r_b$ the largest rank in $B$.
In that case, the only way to generate such a dale is to order $s_{r_b}$ so that one of the $d+1 - r_b$ higher pinnacles is directly to its left or right depending on whether the corresponding element of $B$ is a left or right rank, respectively. So select one such pinnacle and adjoin it to the appropriate side of $s_{r_b}$. 

Next we will examine the dale in $B$ with the next highest rank, $r_{b-1}$. If $r_{b-1}$ is a smaller rank than $r_{b}$, we may once again select a taller pinnacle to place next to $s_{r_{b-1}}$, on either the left or right as necessary, in order to produce the desired dale. This time however, although there are $d+1 - r_{b-1}$ pinnacles higher than $s_{r_{b-1}}$, one of them is unavailable since we have already adjoined two of the higher-ranked pinnacles together. More specifically, because of adjoining a higher pinnacle with $s_{r_b}$, we know that one taller pinnacle cannot be joined to its left and another cannot be joined to  its right.  So no matter whether $r_{b-1}$ corresponded to a left or right dale, there is one less option. Therefore, the number of ways to append a larger pinnacle is $d+1 - r_{b-1}-1$. On the other hand, if $r_{b-1}=r_b$ then we need to adjoin a second pinnacle to $s_{r_b}$ on the side opposite the one used when considering $r_b$.  Again, the pinnacle already adjoined to $s_{r_b}$ removes one option so the number of choices is $d+1 - r_{b-1}-1$ as before. 
So in either case we have the same number of possibilities. Similar consideration show that, in general, each $r_{b-i}$ results in $d+1-i-r_{b-i}$ choices for adjoining pinnacles. Note that for this argument we are using the fact that $b\le d$ since if $b=d+1$ then the string of pinnacles would wrap into a circle before creating the final dale.

Once all dales have been created by the above process, we  only need to count the number of ways to join the resulting strings of pinnacles together. Since we have adjoined pinnacles together $b$ times, we have $d+1-b$ strings  which we then must arrange in a circle.  This can be done in $(d+1-b-1)! = (d-b)!$ ways. Therefore, 
$$(d-b)! \prod_{i=0}^{b-1} \left(d+1-i-r_{b-i}\right)$$
is the number of orderings $[\tau]$ that will allow for a given $B$ to be a subset of $D_{[\tau]}$.
\end{proof}

We are now in a position to prove Theorem~\ref{main1} which we restate here for ease of reference.

\begin{thm}
\label{p_S(n)}
Given $n\in\bbP$ and admissible  $S=\{s_1<s_2<\ldots<s_d\}$ we have
$$
p_S(n)=2^{n-2d-1}\sum_{B \subseteq D:\ |B|\leq d} (-1)^{b} (d-b)! \left(\prod_{i=0}^{b-1} \left( d+1-i-r_{b-i}\right)\right) \left(\prod_{i=0}^{d} (d+1-i-b_{i+1})^{n_i}\right).
$$
\end{thm}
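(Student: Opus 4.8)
The plan is to assemble the four preceding lemmas and then settle the one point they leave open, namely the passage from placements to genuine permutations. By Lemma~\ref{circ} it suffices to count the cyclic permutations $[\pi]\in[\fS_{n+1}]$ with $\Pin[\pi]=S'$, where $S'=S\cup\{n+1\}$ and $s_{d+1}=n+1$. By Lemma~\ref{LinCir} these correspond exactly to the placements in which every dale is nonempty. So first I would total the valid placements and afterwards divide by the number of placements that yield each individual permutation.

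To total the valid placements, I would sum the count from Lemma~\ref{PlaFixOrd} over all cyclic orderings $[\tau]$ of the pinnacle indices:
$$
\sum_{[\tau]}\ \sum_{B\sbs D_{[\tau]}} (-1)^{b}\prod_{i=0}^{d} 2^{n_i}(d+1-i-b_{i+1})^{n_i}.
$$
Because $\sum_{i=0}^{d} n_i = s_{d+1}-s_0-(d+1)=n-d$, the factor $\prod_i 2^{n_i}=2^{n-d}$ pulls out of the entire expression. The decisive step is to interchange the two summations: instead of fixing $[\tau]$ and ranging over $B\sbs D_{[\tau]}$, I fix $B\sbe D$ and count the orderings $[\tau]$ with $B\sbe D_{[\tau]}$, which is exactly the content of Lemma~\ref{[tau]forB} as long as $|B|\le d$. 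This interchange is legitimate because the summand $(-1)^b\prod_i(d+1-i-b_{i+1})^{n_i}$ depends only on $B$ (through $b$ and the $b_{i+1}$) and not on $[\tau]$, while the terms with $|B|=d+1$ vanish since they carry the factor $d+1-b_1=0$ already noted in the proof of Lemma~\ref{PlaFixOrd}. The total number of valid placements therefore equals
$$
2^{n-d}\sum_{B\sbe D:\ |B|\le d} (-1)^{b}\,(d-b)!\left(\prod_{i=0}^{b-1}(d+1-i-r_{b-i})\right)\left(\prod_{i=0}^{d}(d+1-i-b_{i+1})^{n_i}\right).
$$

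The step I expect to be the main obstacle is the final conversion from placements to permutations, which requires showing that the over-count is uniformly $2^{d+1}$. For a valid $[\pi]$ each of the $d+1$ gaps between consecutive pinnacles in the cyclic order contains a nonempty dale, hence a unique vale; that vale is the only element of its gap that may be assigned either to the decreasing factor on its left or to the increasing factor on its right without altering $[\pi]$, since every other element lies on a forced strictly descending or strictly ascending run. Thus precisely $2^{d+1}$ placements realize each permutation with $\Pin[\pi]=S'$, and dividing the displayed total by $2^{d+1}$ converts the prefactor $2^{n-d}$ into $2^{n-d-(d+1)}=2^{n-2d-1}$, which is exactly the claimed formula for $p_S(n)$. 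I would finish by verifying the degenerate case $d=0$, where $S=\emp$, there is a single gap, and the formula collapses to the expected value $2^{n-1}$.
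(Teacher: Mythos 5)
Your proposal is correct and follows essentially the same route as the paper's own proof: reduce to cyclic permutations via Lemma~\ref{circ}, count valid placements by summing Lemma~\ref{PlaFixOrd} over orderings, interchange the sums using Lemma~\ref{[tau]forB} (with the $|B|=d+1$ terms killed by the factor $d+1-b_1=0$), and divide by the uniform overcount of $2^{d+1}$ coming from the free assignment of each gap's vale. Your more explicit justification of that $2^{d+1}$ overcount (each gap has a unique vale, and all other elements lie on forced monotone runs) is a slightly fuller version of the argument the paper gives in one sentence, but the substance is identical.
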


\begin{proof}
It is easy to verify the formula if $d=0$, so we assume $d>0$.
From Lemma~\ref{LinCir}, the number of permutations $\pi \in \fS_{n+1}$ with pinnacle set $S$ equals the number of cyclic permutations $[\pi] \in [\fS_{n+1}]$ with pinnacle set $S' = S \cup\{n+1\}$.  So we will count the latter. From Lemma~\ref{PlaFixOrd}, the number of placements which correspond to a cyclic permutation with pinnacle set $S'$ is given by
$$\sum_{[\tau]}\ \sum_{B \subseteq D_{{[\tau]}}} (-1)^{b}\prod_{i=0}^{d}2^{n_i} (d+1-i-b_{i+1})^{n_i}$$ 
where the outer sum is over all possible cyclic orderings $[\tau]$ of the index set of $S'$.  We now wish to swap the summations so that the outer sum is over all $B \subseteq D$ with $|B|\leq d$.  We may restrict to size at most $d$ since any larger $B$ will either consist of a combination of dales that cannot exist, or will require all $d+1$ dales to be empty which is impossible because of the assumption that $d>0$.
In order to interchange the summations we must multiply the term corresponding to each $B$ by the number of distinct permutations $[\tau]$ that could have generated it. This was counted in Lemma \ref{[tau]forB}, and so we get the formula 
$$\sum_{B \sbs D:\ |B|\leq d} (-1)^{b} (d-b)! \left(\prod_{i=0}^{b-1} \left( d+1-i-r_{b-i}\right)\right) \left(\prod_{i=0}^{d} 2^{n_i} (d+1-i-b_{i+1})^{n_i}\right)$$ for the number of placements.

Now we seek to turn the placements into permutations. Since all dales are guaranteed to be non-empty, we have that every permutation corresponding to one of these placements will have $d+1$ non-pinnacle elements that are part of both a decreasing factor and an increasing factor. This means that every such corresponding $[\pi]$ has been counted by $2^{d+1}$ placements. Dividing by this, and also pulling some common factors of two out from the second product, we have
\begin{align*}
p_S(n)&= 2^{-d-1} \prod_{i=0}^{d} 2^{n_i}\sum_{B \subseteq D:\ |B|\leq d} (-1)^{b} (d-b)! \left(\prod_{i=0}^{b-1} \left(d+1-i-r_{b-i}\right)\right) \left(\prod_{i=0}^{d} (d+1-i-b_{i+1})^{n_i}\right)\\
&= 2^{n-2d-1}\sum_{B \subseteq D:\ |B|\leq d} (-1)^{b} (d-b)! \left(\prod_{i=0}^{b-1} \left( d+1-i-r_{b-i}\right)\right) \left(\prod_{i=0}^{d} (d+1-i-b_{i+1})^{n_i}\right)   
\end{align*}
where this is the formula we set out to prove.
\end{proof}

In~\cite{dnpt:psp}, explicit formulas were given for $p_S(n)$ when $|S|\le3$.
These expressions follow easily from the previous reslt.
\begin{cor}
We have the following values for $p_S(n)$.
    \begin{enumerate}
        \item[(1)] \label{PinForm1} If $S=\emp$ then
        $$
        p_S(n)=2^{n-1}.
        $$
        \item[(2)] \label{PinForm2} If  $S=\{l\}$ where $3\le l\le n$ then 
        $$
        p_S(n)=2^{n-2}(2^{l-2} - 1).
        $$
        \item[(3)] \label{PinForm3} If $S = \{l,m\}$ where $l\ge 3$, $m\ge 5$, and $l<m\le n$, then
        $$
        p_S(n)=2^{n+m-l-5}(3^{l-1} - 2^l + 1) - 2^{n-3}(2^{l-2}-1).
        $$
    \end{enumerate}

\end{cor}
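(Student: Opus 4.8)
The plan is to derive all three formulas by directly specializing Theorem~\ref{p_S(n)} to $d=0$, $d=1$, and $d=2$, and then collapsing the resulting finite sum over subsets $B\sbe D$. In every case the first step is to record the gap sizes $n_i=s_{i+1}-s_i-1$ using the conventions $s_0=0$ and $s_{d+1}=n+1$, and to enumerate the relevant subsets $B$ of the master dale rank set $D=\{1_l,1_r,\ldots,d_l,d_r\}$, reading off $b$, the ranks $r_j$, and the tail counts $b_i$ for each. Since $b_{d+1}=0$ always, the last factor $(d+1-d-b_{d+1})^{n_d}=1^{n_d}$ of the inner product is harmless, which simplifies the bookkeeping slightly.

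The first two cases are short. For part (1) we have $d=0$, so $D=\emp$, $n_0=n$, and the only summand is $B=\emp$; the formula collapses to $p_S(n)=2^{n-1}$. For part (2), $d=1$ gives $D=\{1_l,1_r\}$ with $n_0=l-1$, $n_1=n-l$, and prefactor $2^{n-3}$. There are three subsets to consider: $B=\emp$ contributes $2!/2!\cdot 2^{n_0}1^{n_1}=2^{l-1}$ (after accounting for $(d-b)!=1$ and the empty $r$-product), while each of the two singletons $\{1_l\}$ and $\{1_r\}$ has $r_1=1$ and contributes $-1$. Summing gives $2^{l-1}-2$, so that $p_S(n)=2^{n-3}(2^{l-1}-2)=2^{n-2}(2^{l-2}-1)$, matching the claim.

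The substantive case is part (3), where $d=2$, $D=\{1_l,1_r,2_l,2_r\}$, $n_0=l-1$, $n_1=m-l-1$, $n_2=n-m$, and the prefactor is $2^{n-5}$. Here I would enumerate all subsets $B$ with $|B|\le 2$, grouping them by symmetry: the empty set; the two rank-$1$ singletons; the two rank-$2$ singletons; and the pairs, which split into $\{1_l,1_r\}$, $\{2_l,2_r\}$, and the four mixed pairs. The key features to track are that $\{2_l,2_r\}$ carries a factor $(d+1-1-r_1)=(2-2)=0$ and so drops out, and that the inner product reduces to $(3-b)^{n_0}(2-b_2)^{n_1}$ in this case. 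Carrying out the arithmetic, the surviving terms are $3^{l-1}2^{m-l}$ from $B=\emp$, $-2^m$ from the rank-$1$ singletons, $-2^l$ from the rank-$2$ singletons, $2^{m-l}$ from $\{1_l,1_r\}$, and $4$ from the mixed pairs, giving an inner sum of $3^{l-1}2^{m-l}-2^m+2^{m-l}-2^l+4$. Multiplying by $2^{n-5}$ and regrouping as $2^{n+m-l-5}(3^{l-1}-2^l+1)-2^{n-3}(2^{l-2}-1)$ then yields the stated expression. I expect the main obstacle to be purely this bookkeeping in the $d=2$ case: correctly computing $r_j$ and $b_i$ for each of the roughly ten subsets, recognizing which terms vanish, and then performing the algebraic regrouping into the target closed form without sign or exponent errors.
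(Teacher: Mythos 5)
Your proposal is correct and takes essentially the same approach as the paper: specialize Theorem~\ref{p_S(n)} to $d=0,1,2$, enumerate the subsets $B\sbe D$ grouped by rank pattern (noting that $\{2_l,2_r\}$ is killed by the vanishing factor $d+1-1-r_1=0$), and regroup the resulting sum into the stated closed forms. All of your term values agree with the paper's computation---indeed your value $2^{m-l}$ for $B=\{1_l,1_r\}$ matches the $(2)2^{m-l-1}$ appearing in the paper's displayed sum, and silently corrects a small typo in its bulleted list, which records that term as $2$.
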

\begin{proof}
In each of the results we apply Theorem $\ref{p_S(n)}$.

(1) When $d=0$, the first product in Theorem $\ref{p_S(n)}$ is always empty and the second always equals one. Therefore, everything reduces immediately to $p_S(n)=2^{n-1}$, as desired. 

(2) When $d=1$ we have $n_0 = l-1, n_1 = n-l$. Therefore, we have the following possibilities for $B$, and the corresponding terms in the summation
\begin{itemize}
    \item $B = \emptyset: 2^{l-1}$ 
    \item $B = \{1_l\}$ or $\{1_r\}$: $-1$ 
\end{itemize}
which when substituted into the formula gives
$$
p_S(n)=2^{n-3}(2^{l-1}-2) = 2^{n-2}(2^{l-2}-1).
$$

(3)  When $d=2$ we have $n_0 = l-1, n_1 = m-l-1,$ and $n_2 = n-m$. Additionally, the first inner product will always zero out if $2_r, 2_l$ are both in $B$. Therefore, we have the following possibilities for $B$, and the corresponding terms in the summation: 
\begin{itemize}
    \item $B = \emptyset: (2)3^{l-1}2^{m-l-1}$ 
    \item $B = \{1_l\}$ or $\{1_r\}$: $(-2)2^{l-1}2^{m-l-1}$ 
    \item $B = \{2_l\}$ or $\{2_r\}$: $(-1)2^{l-1}$
    \item $B = \{1_l, 1_r\}$: $2$
    \item $B = \{1_l, 2_r\}$ or $\{1_r, 2_r\}$ or $\{1_l, 2_l\}$ or $\{1_r, 2_l\}$: $1$.
\end{itemize}
When we substitute all these  into the formula, we get 
\begin{align*}
p_S(n)
&= 2^{n-5} [(2)3^{l-1}2^{m-l-1} - (4)2^{l-1}2^{m-l-1} - (2)2^{l-1} + (2)2^{m-l-1} + 4]\\
& = 2^{n-5} [(2)3^{l-1}2^{m-l-1} - (4)2^{l-1}2^{m-l-1} + (2)2^{m-l-1}] - 2^{n-5}
[(2)2^{l-1} - 4]\\
&=2^{n+m-l-5}(3^{l-1} - 2^l + 1) - 2^{n-3}(2^{l-2}-1)
\end{align*}
as desired. 
\end{proof}

We can make Theorem~\ref{p_S(n)} more efficient by summing over certain weak compositions rather than subsets.  A {\em weak composition} of $n\in\bbN$ is a sequence $\al=[\al_1,\al_2,\ldots,\al_k]$ of nonnegative integers called {\em parts} such that $\sum_i \al_i =n$.  In this case we write $\al\comp n$ or $|\al|=n$ where $|\al|=\sum_i\al_i$.  To $B \subseteq D$ we associate the composition $\alpha = [\alpha_1, \alpha_2, \ldots, \alpha_d]$ where $\alpha_i$ is the number of dales in $B$ of rank $i$.   To illustrate, for the example in Figure~\ref{DalFig} the corresponding composition is $\al=[2,1,1,2,0,1]$.  Note that all the necessary parameters for $D$ can be read off of $\al$.  In particular
$$
r_j =\min\{i \mid \al_1+\al_2+\cdots+\al_i\ge j\},
$$
and
$$
b_i = \al_i+\al_{i+1}+\cdots+\al_d.
$$
Note that
$$
b=b_1=|\al|.
$$
Thus we will be able to sum over the following set
$$
C(d) = \{\al=[\al_1,\al_2,\ldots,\al_d]\ \mid\
\text{$\al_i\in[0,2]$ for all $i$ and $|\al|\le d$}\}.
$$

We must find how many $B$ correspond to a given $\al$.  If $\al_i=0$ then $B$ contains no dales of rank $i$.  If $\al_i=2$ then $B$ contains both dales of rank $i$.  So the only choice comes if $\al_i=1$ in which case $B$ could contain either $i_l$ or $i_r$.  Letting
$$
o = \text{the number of $\al_i=1$}
$$
we see that the number of $B$ represented by $\al$ is $2^o$.  Thus we have proved the following result.
\begin{cor}
\label{p_S(n)C}
Given $n\in\bbP$ and admissible  $S=\{s_1<s_2<\ldots<s_d\}$ we have
$$
p_S(n)=2^{n-2d-1}\sum_{\alpha \in C(d)} (-1)^{b} 2^o (d-b)! \left(\prod_{i=0}^{b-1} \left( d+1-i-r_{b-i}\right)\right) \left(\prod_{i=0}^{d} (d+1-i-b_{i+1})^{n_i}\right).\qed
$$
\end{cor}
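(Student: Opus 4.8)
The plan is to obtain this formula directly from Theorem~\ref{p_S(n)} by reindexing its sum over subsets $B\sbe D$ according to the associated composition $\al$. The essential observation is that the entire summand appearing in Theorem~\ref{p_S(n)} is a function of $\al$ alone, so that subsets $B$ sharing a common $\al$ contribute identical terms; collecting these terms and counting how many $B$ yield each $\al$ then produces the claimed sum over $C(d)$.

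First I would verify that every quantity entering the summand is determined by $\al$. The value $b=|B|$ equals $|\al|=b_1$, and the suffix sums $b_i=\al_i+\cdots+\al_d$ are manifestly functions of $\al$. The one point deserving a moment's care is the sequence of ranks $r_1,\ldots,r_b$: since the total order on $D$ places every element of rank $i$ before every element of rank $i+1$, the multiset of ranks occurring among the $b$ elements of $B$---and hence each $r_j$---depends only on how many elements of each rank lie in $B$, that is, only on $\al$. This is exactly the content of the stated formula $r_j=\min\{i \mid \al_1+\cdots+\al_i\ge j\}$. Consequently, for any two subsets $B,B'$ with the same composition, the sign $(-1)^b$, the factor $(d-b)!$, and the products $\prod_{i=0}^{b-1}(d+1-i-r_{b-i})$ and $\prod_{i=0}^{d}(d+1-i-b_{i+1})^{n_i}$ all coincide.

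Next I would compute the size of each fiber of the map $B\mapsto\al$. Because $D$ contains exactly the two elements $i_l$ and $i_r$ of each rank $i$, the composition records, for each rank, whether $B$ contains neither, one, or both of them: if $\al_i=0$ there is no choice, if $\al_i=2$ both are forced into $B$, and only when $\al_i=1$ is there a genuine binary choice between $i_l$ and $i_r$. Hence the number of subsets realizing a given $\al$ is $2^o$ with $o=\#\{i \mid \al_i=1\}$. It also follows that the image of the collection of $B\sbe D$ with $|B|\le d$ is precisely $C(d)$, since $|\al|=|B|\le d$ and each part lies in $[0,2]$.

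Finally I would substitute. Grouping the sum in Theorem~\ref{p_S(n)} by composition, each $\al\in C(d)$ receives $2^o$ equal copies of its common summand, so the sum over $B$ equals $\sum_{\al\in C(d)}2^o(-1)^b(d-b)!(\cdots)(\cdots)$, which is exactly the right-hand side of the corollary. I do not expect a genuine obstacle here beyond the bookkeeping: the only substantive check is that $r_j$ is a function of $\al$, and once that is secured the result is a routine regrouping of a finite sum.
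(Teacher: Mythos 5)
Your proposal is correct and takes essentially the same route as the paper: the paper also reindexes the sum of Theorem~\ref{p_S(n)} by the composition $\alpha$ associated to $B$, observes that $b$, the $b_i$, and the $r_j$ are determined by $\alpha$ alone (via $r_j=\min\{i \mid \alpha_1+\cdots+\alpha_i\ge j\}$ and $b_i=\alpha_i+\cdots+\alpha_d$), and counts $2^o$ subsets in each fiber according to the number of parts equal to $1$. No gaps to report.
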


In order to compare this formula to the one in~\cite{dhhin:fep}, we need to introduce some notation.  The {\em vale set} of a permutation $\pi$ is
$$
\Val\pi=\{\pi_i \mid \pi_{i-1}>\pi_i<\pi_{i+1}\}.
$$
Call a pair $(S,T)$ {\em $n$-admissible} if there is a permutation $\pi\in\fS_n$ with $\Pin\pi=S$ and $\Val\pi=T$.  Define
$$
\cV_n(S) = \{T \mid \text{$(S,T)$ is $n$-admissible}\}.
$$
\bth[\cite{dhhin:fep}]
\label{p_S(n)V}
Given $n\in\bbP$ and admissible  $S$ with $\#S=d$ we have
$$
p_S(n)=2^{n-2d-1}\sum_{T\in\cV_n(S)}\prod_{s\in S}\binom{N_{ST}(s)}{2}
\prod_{t\in [n]-(S\uplus T)} N_{ST}(t)
$$
where $S_i = \{s\in S \mid s<i\}$,
$T_i = \{t\in T \mid t<i\}$, and
$N_{ST}(i)=\#T_i-\#S_i$.\hqed
\eth
In order to estimate the number of terms in this sum, we need a formula for $\#\cV_n(S)$.  Let
$$
K(d) =  \{\al=[\al_1,\al_2,\ldots,\al_d]\comp d\ \mid\
\text{$\al_1+\al_2+\cdots+\al_k\ge k$ for all $k\in[d]$}\}.
$$
\bth[\cite{dhhin:fep}]
\label{V_n(S)}
Given $n\in\bbP$ and admissible  $S=\{s_1<s_2<\ldots<s_d\}$ we have

\vs{5pt}

\eqed{
\#\cV_n(S) = \sum_{\al\in K(d)} \binom{n_0-1}{\al_1}\prod_{i=2}^d \binom{n_{i-1}}{\al_i}.
}
\eth

\begin{table}[]
    \centering
    $$
    \barr{c|c|c}
      S  &  \text{DLHHIN}  & \text{DLMSSS} \vsp\\
      \hline  \hline
\{3, 5, 7, 9, 11, 13, 15, 17, 19, 21\} &  9.2 \times 10^{-5} &0.72\vsp\\
\{3, 6, 9, 12, 15, 18, 21, 24, 27, 30\} & 0.11      & 0.73\vsp\\
\{3, 7, 11, 15, 19, 23, 27, 31, 35, 39\} & 9.5       & 0.73\vsp\\
\{3, 8, 13, 18, 23, 28, 33, 38, 43, 48\} & 210      & 0.78\vsp
    \earr
    $$
 
    \caption{Run times in seconds compared when most $n_i$ are equal}
    \label{RunEq}
\end{table}

We can now compare the number of terms in the sums of Corollary~\ref{p_S(n)C} and Theorem~\ref{p_S(n)V}.  In the former we have $c(d) :=\# C(d)\le 3^d$ terms, where the inequality comes from the fact that every $\al_i\in\{0,1,2\}$.  In the latter, we have $v_n(S):=\#\cV_n(S)$ terms which depends on $n$ and $S$, and not just $d$ as seen in Theorem~\ref{V_n(S)}.  If $n_1\le 4$ and $n_i\le 3$ for $i\ge 2$ then each of the binomial coefficients in the sum is a most $3$ and so $v_n(S)$ could be significantly smaller than $c(d)$.  But if even one of the $n_i$ is large, then the inequality will be reversed.  For example, suppose 
$n_1\ge 2d+1$ and take $\al=[d,0,0,\ldots,0]\in K(d)$.  Then, by Stirling's approximation,
$$
v_n(S)\ge\binom{2d}{d}\sim \frac{4^d}{\sqrt{\pi d}}
$$
which will eventually be greater than $3^d$.   So, for fixed $d$, there are only finitely many $n$ such that $v_n(S)\le c(d)$.  Thus, in most cases, Corollary~\ref{p_S(n)C} will be more efficient.  We should mention that Diaz-Lopez, Insko, and Nilsen~\cite{din:po} have come up with a refinement of the ideas in~\cite{dhhin:fep} which permits the product of binomial coefficients in Theorem~\ref{V_n(S)} to be replaced by $2^d$.

The observations of the previous paragraph are borne out by actual computer computations.
In Tables~\ref{RunEq} and~\ref{RunCo} we show the results of computing $p_S(1000)$ for various sets $S$ (first column) with constant $d$ by the algorithm in~\cite{dhhin:fep} (second column) and our algorithm (third column).  The run times are in seconds and are the average over 10 trials for each set using a 15-inch 2017 MacBook Pro with a 3.1 GHz Quad-Core Intel Core i7 processor.  In Table~\ref{RunEq} the $n_i$ for $0<i<d$ are constant in each set, but allowed to increase as one goes down the table.  As expected, the  algorithm using vales starts out orders of magnitude faster than the one using dales but quickly becomes orders of magnitude slower, with the latter's times being virtually constant.  Similar behaviour is shown in the two parts of Table~\ref{RunCo} which keep all of the $n_i$ for $0\le i < d$ constant except for one which is allowed to grow.  Note the difference in growth rate of the vale algorithm between increasing  $n_4$ (upper chart) and $n_0$ (lower chart).  

\begin{table}[]
    \centering

    $$
    \barr{c|c|c}
    \multicolumn{3}{c}{\text{Increase $n_4$ with other $n_i$ constant}}\\[5pt]
      S  &  \text{DLHHIN}  & \text{DLMSSS} \vsp\\
      \hline  \hline
\{3, 5, 7, 9, 11\} &  2.9 \times 10^{-5} &0.0014\vsp\\
\{3, 5, 7, 9, 21\} & 7.1 \times 10^{-5}  & 0.0014\vsp\\
\{3, 5, 7, 9, 31\} & 0.00012            & 0.0015\vsp\\
\{3, 5, 7, 9, 41\} & 0.00017            & 0.0015\vsp
    \earr
    $$   
    
        $$
    \barr{c|c|c}
    \multicolumn{3}{c}{\text{Increase $n_0$ with other $n_i$ constant}}\\[5pt]
      S  &  \text{DLHHIN}  & \text{DLMSSS} \vsp\\
      \hline  \hline
\{3, 5, 7, 9, 11\} &  2.9 \times 10^{-5} &0.0014\vsp\\
\{13, 15, 17, 19, 21\} & 0.012           & 0.0015\vsp\\
\{23, 25, 27, 29, 31\} & 0.26           & 0.0015\vsp\\
\{33, 35, 37, 39, 41\} & 1.8            & 0.0015\vsp
    \earr
    $$   
    
    \caption{Run times in seconds compared when most $n_i$ are  constant}
    \label{RunCo}
\end{table}

Another advantage to this approach is that it can be modified to count 
$\#\cO(S)$, the number of admissible orderings of an admissible pinnacle set $S$.
First, if we fix $n>0$ we have that Lemma \ref{circ} will again allow us to reduce to the case of cyclic orderings of the pinnacle set $S'$ for permutations in $\fS_{n+1}$. We now prove the following intermediate result. 
\begin{lem}
\label{AdmOrd}
Consider a cyclic ordering $[\tau]$ with dale set $D_{[\tau]}$ and corresponding $r_j$.  The ordering is admissible if and only if 
$$
j\leq n_0 + n_1 + \cdots + n_{r_{j}-1}
$$
for all $j\in[d+1]$.
\end{lem}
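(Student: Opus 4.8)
The plan is to recognize the admissibility of a fixed ordering $[\tau]$ as a system-of-distinct-representatives (SDR) problem and then invoke Hall's theorem. By Lemma~\ref{LinCir}, the ordering $[\tau]$ is admissible exactly when there is a placement in which every one of the $d+1$ dales is non-empty. Because each non-pinnacle element occupies a single position of $[\pi]$, it falls into at most one gap and hence into at most one dale; so making all dales simultaneously non-empty amounts to selecting \emph{distinct} non-pinnacle elements, one to seed each dale. A dale of rank $i$ lies between $s_i$ and a larger pinnacle and, by definition, can only be seeded by a non-pinnacle element smaller than $s_i$. Thus the first step is to record precisely which elements of $\overline{S'}$ are eligible for a rank-$i$ dale.

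The eligible elements for a rank-$i$ dale are exactly the non-pinnacles less than $s_i$, namely those lying in the gaps $(s_0,s_1),(s_1,s_2),\ldots,(s_{i-1},s_i)$, of which there are
$$
n_0+n_1+\cdots+n_{i-1}.
$$
Next I would form the bipartite graph whose left vertices are the $d+1$ dales of $D_{[\tau]}$ and whose right vertices are the elements of $\overline{S'}$, joining a rank-$i$ dale to every non-pinnacle below $s_i$. An SDR is precisely a matching saturating the dales; any surplus non-pinnacles can then be absorbed into monotone factors adjacent to $s_{d+1}=n+1$ without creating new pinnacles or disturbing the old ones. Hence, by Lemma~\ref{LinCir}, admissibility of $[\tau]$ is equivalent to the existence of such a saturating matching.

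Now I would apply Hall's theorem. The key structural feature is that the eligibility sets are \emph{nested}: as $i$ grows, the set of non-pinnacles below $s_i$ only grows. Consequently the neighborhood of any collection of dales is controlled by the largest rank occurring in it, so the binding Hall inequalities are the prefix ones: sorting the dale ranks as $r_1\le r_2\le\cdots\le r_{d+1}$ and taking, for each $j$, the $j$ dales of smallest rank, their common pool of eligible elements has size $n_0+n_1+\cdots+n_{r_j-1}$, whence Hall's condition reads
$$
j\le n_0+n_1+\cdots+n_{r_j-1}
$$
for all $j\in[d+1]$. A short monotonicity argument, using that the partial sums $n_0+\cdots+n_{i-1}$ are non-decreasing in $i$, shows these prefix inequalities are equivalent to the full Hall condition, giving the stated criterion.

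The step I expect to require the most care is the reduction of the full Hall condition to exactly these $d+1$ inequalities. One must argue that, because of the nested neighborhoods, among all subsets of dales of a given size the tightest constraint comes from the smallest-rank dales, and then verify the equivalence between the per-rank form $c_i\le n_0+\cdots+n_{i-1}$, where $c_i$ denotes the number of dales of rank at most $i$, and the per-index form in the statement. The remaining points — that distinctness of representatives is forced by the disjointness of the gaps, that the pool below $s_i$ has size $n_0+\cdots+n_{i-1}$, and that surplus non-pinnacles never obstruct or alter the pinnacle set — are routine given Lemma~\ref{LinCir} and the monotone-factor structure of placements.
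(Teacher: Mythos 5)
Your proof is correct, but it routes through Hall's marriage theorem where the paper argues directly and constructively. Both arguments rest on the same two observations you make — that a rank-$i$ dale can only be fed by the $n_0+n_1+\cdots+n_{i-1}$ non-pinnacles below $s_i$, and that these eligibility pools are nested — but the paper packages them elementarily: necessity is a pigeonhole count (if $j > n_0+\cdots+n_{r_j-1}$, the dales of rank at most $r_j$ outnumber the elements small enough to fill them), and sufficiency is a greedy construction (place the smallest unused non-pinnacle into the lowest-ranked unfilled dale and proceed upward; the prefix inequalities guarantee the supply never runs out at any step). You instead formalize ``one distinct element per dale'' as a system of distinct representatives, invoke Hall, and then collapse the exponentially many Hall inequalities to the $d+1$ prefix ones via nestedness — the step you rightly flag as delicate, and which does go through, since the neighborhood of any family of dales equals the pool of its maximum rank, so the binding constraints come from taking all dales of rank at most $r_j$. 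What Hall buys you is a clean conceptual frame, at the cost of two extra verifications the paper never needs: that admissibility is genuinely equivalent to a saturating matching (including absorbing surplus non-pinnacles into the factor ending at $n+1$, which you handle correctly), and the reduction of the full Hall condition to the prefix form. What the paper's greedy argument buys is a short self-contained proof that, in effect, establishes the nested special case of Hall's theorem inline. The underlying combinatorial content is identical.
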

\begin{proof}
Note that,  by definition of the $n_i$ and $r_i$, the right hand side of the inequality is simply the number of non-pinnacles small enough to be placed in any of the dales having rank at least $r_j$. So if for  any $j$ we have $j > n_0 + n_1 + \cdots n_{r_{j}-1}$, then there will be at least $j+1$ dales having rank at most $r_j$.  This means there would not be enough small non-pinnacle elements to fill them all. Therefore, any such ordering is not admissible. On the other hand, if we have that $j \leq n_0 + n_1 + \cdots n_{r_{j}-1}$ for all $j$, then we may always fill all the dales by placing the smallest non-pinnacle in the lowest ranked dale, and proceeding upwards. The inequalities guarantee that we will always have enough non-pinnacles to do this at every step, and so we are done.
\end{proof}

Since the problem is trivial if $d= 0$, so we may also assume that $d>0$. We also define for the master dale rank set $D$
$$
D' = D - \{1_l, 1_r\}
$$
and for any subset $B$
$$
\delta_B = \begin{cases} 
      1 & \text{if } j \leq n_0 + n_1 + \cdots +n_{r_{j}-1} \text{ for all } j\in[b],\\
      0 & \text{otherwise.} 
   \end{cases}
$$
With this notation, we can count admissible orderings.
\begin{thm}
\label{O(S)}
If  $d\in\bbP$ and $S$ is admissible then
$$
\#\cO(S)=\sum_{B \subseteq D':\ |B|= d-1} \delta_{B \cup \{1_l, 1_r\}} \prod_{i=0}^{d-2} \left( d+1-i-r_{d-1-i}\right).
$$
\end{thm}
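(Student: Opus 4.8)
The plan is to reduce the count of admissible orderings to a sum over the possible dale rank sets, reusing the cyclic machinery already developed for Theorem~\ref{p_S(n)}. First I would invoke Lemma~\ref{circ} to pass from linear permutations in $\fS_n$ to cyclic permutations in $[\fS_{n+1}]$ with pinnacle set $S'=S\cup\{n+1\}$. Under this correspondence a left-to-right ordering $\si$ of the pinnacles of $S$ is recorded by the cyclic ordering $[\tau]\in[\fS_{d+1}]$ of the indices $\{1,\ldots,d+1\}$: since the top index $d+1$ (corresponding to $n+1$) is always a pinnacle, cutting the circle immediately after it recovers the linear order, giving a bijection between the orderings $\si$ of $S$ and the cyclic orderings $[\tau]$. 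Moreover $\si$ is admissible exactly when some placement consistent with $[\tau]$ fills every dale, which by Lemma~\ref{AdmOrd} is precisely the condition encoded by $\delta$. Hence it suffices to count the admissible cyclic orderings $[\tau]$.

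Next I would organize this count by the dale rank set $D_{[\tau]}$, exploiting that admissibility of $[\tau]$ depends only on $D_{[\tau]}$ through the ranks $r_j$ entering Lemma~\ref{AdmOrd}. For $d>0$ every dale rank set has exactly $d+1$ elements and always contains both rank-one dales $1_l$ and $1_r$, so it can be written uniquely as $B\cup\{1_l,1_r\}$ with $B\sbe D'=D-\{1_l,1_r\}$ and $|B|=d-1$. This is why the sum in the statement ranges over such $B$, and why each term carries the factor $\delta_{B\cup\{1_l,1_r\}}$: all orderings sharing a fixed dale set are simultaneously admissible or not.

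The crux is to count the orderings $[\tau]$ whose dale set is \emph{exactly} $B\cup\{1_l,1_r\}$. Here I would apply Lemma~\ref{[tau]forB} to the set $B$ itself. That lemma counts orderings with $B\sbe D_{[\tau]}$; but because $D_{[\tau]}$ has precisely $d-1$ elements outside rank one and $B$ is a $(d-1)$-subset of $D'$, the containment $B\sbe D_{[\tau]}$ forces $D_{[\tau]}=B\cup\{1_l,1_r\}$. Thus Lemma~\ref{[tau]forB} with $b=d-1$ gives the exact count $(d-b)!\prod_{i=0}^{d-2}(d+1-i-r_{d-1-i})$, and the prefactor $(d-(d-1))!=1$ disappears, leaving exactly the product in the statement. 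Summing the weighted counts $\delta_{B\cup\{1_l,1_r\}}\prod_{i=0}^{d-2}(d+1-i-r_{d-1-i})$ over all $B\sbe D'$ with $|B|=d-1$ then yields the formula.

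The step I expect to be the main obstacle is this reduction ``$B\sbe D_{[\tau]}\Rightarrow D_{[\tau]}=B\cup\{1_l,1_r\}$,'' which upgrades the inequality-type count of Lemma~\ref{[tau]forB} into an exact-dale-set count and lets the factorial collapse to $1$; it rests on the two structural facts (cardinality $d+1$ and the forced presence of $1_l,1_r$) recorded just before Lemma~\ref{LinCir}. I would also treat the boundary case $d=1$ separately, where $B=\emptyset$, the product is empty and equals $1$, and $\delta_{\{1_l,1_r\}}$ just checks that $s_1$ can sit between two smaller non-pinnacles. Finally I would flag the notational point that the $r_j$ inside the product are the ranks of $B$, while the $r_j$ inside $\delta$ are the ranks of the full set $B\cup\{1_l,1_r\}$, so that the two factors in each summand are computed with respect to the appropriate set.
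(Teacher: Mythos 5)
Your proposal is correct and follows essentially the same route as the paper: reduce to cyclic orderings via Lemma~\ref{circ}, partition the orderings by their dale rank sets written as $B\cup\{1_l,1_r\}$ with $B\sbe D'$, $|B|=d-1$, observe that the cardinality constraint upgrades the containment count of Lemma~\ref{[tau]forB} to an exact-dale-set count (with the factorial collapsing to $1$), and filter by $\delta$ using Lemma~\ref{AdmOrd}. Your added details (the explicit bijection by cutting the circle after the top pinnacle, and the note on which set the $r_j$ refer to) are sound refinements of the same argument.
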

\begin{proof}
We first wish to sum over all possible orderings, partitioned by their dales. Since every dale set for $d>0$ is guaranteed to have $d+1$ elements and contain $\{1_l, 1_r\}$, we may index the dales by taking $B\subseteq D'$ where $|B| = d-1$. We then consider the following summation
$$\sum_{B \subseteq D':\ |B|= d-1} \,\,\, \prod_{i=0}^{d-2} \left( d+1-i-r_{d-1-i}\right).$$
Clearly this sums over every possible dale set once, and the expression inside comes from Lemma \ref{[tau]forB}, which counts the number of cyclic orderings of $S'$ which have dales containing those in $B$. However, due to the restrictions placed on the size of $B$ and the comments above,  this expression will count those cyclic orderings of $S'$ which have dales equal to 
$B \cup \{1_l, 1_r\}$ instead of just a subset. Therefore, no ordering can be counted twice by two different $B$'s and so every ordering is accounted for exactly once in the above summation, making the total $d!$.

Finally, using Lemma \ref{AdmOrd}, we may exclude from this sum precisely those orderings which are not admissible by writing it as

$$\sum_{B \subseteq D':\ |B|= d-1} \delta_{B \cup \{1_l, 1_r\}} \prod_{i=0}^{d-2} \left( d+1-i-r_{d-1-i}\right).$$

This completes the proof.
\end{proof}

We may also rewrite our result in terms of compositions for a faster summation. Lemma \ref{AdmOrd} still holds as the $r_j$ are the same whether or not the dales sets are represented as compositions, but now we will need make some new definitions.  Let
$$
C'(d) = \{\al=[\al_1,\al_2,\ldots,\al_{d-1}]\comp [d-1]\ \mid\
\text{$\al_i\in[0,2]$ for all $i$}\},
$$
and if $\al\comp b$
$$
\delta_\alpha = \begin{cases}
      1 & \text{if } j \leq n_0 + n_1 + \cdots +n_{r_{j}-1} \text{ for all } j\in[b],\\
      0 & \text{otherwise.} 
   \end{cases}
$$
Also, if $\al=[\al_1,\al_2,\ldots,\al_{d-1}]$ then define
$$
2 \oplus \alpha = [2,\al_1,\al_2,\ldots,\al_{d-1}].
$$
The following result follows from Theorem~\ref{O(S)} in much the same way that Corollary~\ref{p_S(n)C} followed from Theorem~\ref{p_S(n)}.  So the proof is omitted.
\begin{cor}
If  $d\in\bbP$ and $S$ is admissible then

\vs{5pt}

\eqed{
\#\cO(S)=\sum_{\alpha \in C'(d)} \delta_{2 \oplus \alpha} 2^o \prod_{i=0}^{d-2} ( d+1-i-r_{d-1-i}).
}
\end{cor}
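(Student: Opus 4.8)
The plan is to replicate, for admissible orderings, the passage from Theorem~\ref{p_S(n)} to Corollary~\ref{p_S(n)C}: I would start from the subset sum of Theorem~\ref{O(S)} and collapse it onto the weak compositions in $C'(d)$ by collecting together all subsets that record the same number of dales at each rank. Concretely, I would define the map sending a subset $B\sbe D'$ with $|B|=d-1$ to the composition $\al=[\al_1,\ldots,\al_{d-1}]$ in which $\al_i$ is the number of dales of rank $i+1$ lying in $B$ (the ranks available in $D'=D-\{1_l,1_r\}$ being $2,3,\ldots,d$). Since each rank furnishes at most two dales and $|B|=d-1$, the image is a composition of $d-1$ into $d-1$ parts from $\{0,1,2\}$, i.e.\ an element of $C'(d)$, and every such $\al$ arises this way. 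Under this correspondence the complete dale set $B\cup\{1_l,1_r\}$ is precisely the one encoded by $2\oplus\al=[2,\al_1,\ldots,\al_{d-1}]$, the prepended $2$ recording the fact that both rank-$1$ dales are always present.

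Next I would verify that every quantity in the summand of Theorem~\ref{O(S)} is a function of $\al$ alone, hence insensitive to whether a singleton rank contributes its left or its right dale. The numbers $r_j$ are by definition the ranks of the elements of $B$, so they are determined by the rank multiset stored in $\al$; consequently the product $\prod_{i=0}^{d-2}(d+1-i-r_{d-1-i})$ takes the same value for all $B$ sharing a given $\al$. The indicator $\delta_{B\cup\{1_l,1_r\}}$ likewise depends only on the $r_j$ of $B\cup\{1_l,1_r\}$ through the inequalities $j\le n_0+\cdots+n_{r_j-1}$, and matching $B\cup\{1_l,1_r\}$ with $2\oplus\al$ gives $\delta_{B\cup\{1_l,1_r\}}=\delta_{2\oplus\al}$.

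Then I would count the fibers of $B\mapsto\al$. For each index $i$ with $\al_i=1$ there are exactly two subsets agreeing with $\al$ at that rank, namely the choice of $(i+1)_l$ or $(i+1)_r$, whereas parts equal to $0$ or $2$ admit no choice; thus the number of subsets mapping to a fixed $\al$ is $2^o$, where $o$ is the number of parts equal to $1$. Regrouping the sum of Theorem~\ref{O(S)} by the value of $\al$, each class of $2^o$ subsets contributes the identical summand $\delta_{2\oplus\al}\prod_{i=0}^{d-2}(d+1-i-r_{d-1-i})$, which produces the factor $2^o$ and yields exactly the claimed expression for $\#\cO(S)$.

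The only delicate point, and the step I expect to need the most care, is the bookkeeping of the rank shift: the parts of $\al\in C'(d)$ are indexed $1,\ldots,d-1$ while the dales of $D'$ carry ranks $2,\ldots,d$, and the leading $2$ in $2\oplus\al$ reinstates the suppressed rank-$1$ information used by $\delta_{2\oplus\al}$. Once this shift is tracked consistently in the reading-off of the $r_j$ from $\al$, the regrouping is purely mechanical and formally identical to the argument establishing Corollary~\ref{p_S(n)C}, so the proof may safely be omitted.
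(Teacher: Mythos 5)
Your proof is correct and takes essentially the same approach as the paper: the paper omits the argument entirely, remarking only that the corollary follows from Theorem~\ref{O(S)} in the same way that Corollary~\ref{p_S(n)C} follows from Theorem~\ref{p_S(n)}. That omitted argument is precisely your regrouping of the subsets $B\sbe D'$ according to the composition recording the number of dales of each rank, with fibers of size $2^o$, the summand depending only on $\al$, and the rank shift absorbed by prepending the $2$ in $2\oplus\al$.
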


\medskip

{\em Acknowledgement.}  We thank Richard Stanley who, on being shown Lemma~\ref{Delta}, pointed out that $\fp(m,d)$ is a ballot number.
We also thank Alexander Diaz-Lopez and Lars Nilsen for carefully reading the paper and useful comments.

\nocite{*}
\bibliographystyle{plain}

\newcommand{\etalchar}[1]{$^{#1}$}

\end{document}